\numberwithin{equation}{section}
\newtheorem{theorem}{Theorem}[section]
\newtheorem{lemma}[theorem]{Lemma}
\newtheorem{corollary}[theorem]{Corollary}
\theoremstyle{definition}
\newtheorem{remark}[theorem]{Remark}
\newtheorem{definition}[theorem]{Definition}
\newcommand{\et}{\acute{e}t} 
\newcommand{\fppf}{\mathrm{fppf}} 
\newcommand{\Gal}{\mathrm{Gal}} 
\newcommand{\Spec}{\mathrm{Spec }} 
\newcommand{\Sel}{\mathrm{Sel}_p} 
\newcommand{\Conf}{\mathrm{Conf}^n} 
\newcommand{\neron}{N\'{e}ron } 
\newcommand{\Z}{\mathbb{Z}}
\newcommand{\F}{\mathbb{F}}
\newcommand{\A}{\mathbb{A}}
\newcommand{\Tr}{\mathrm{Tr}}
\newcommand{\Frob}{\mathrm{Frob}_q} 
\newcommand{\Q}{\mathbb{Q}}
\newcommand{\E}{\mathcal{E}}
\newcommand{\h}{\mathrm{H}}
\newcommand{\GL}{\mathrm{GL}}
\newcommand{\Oh}{\mathrm{O}}
\begin{document}

\title{On the average of $p$-Selmer rank in quadratic twist families of Elliptic curves over function field}

\author{Sun Woo Park and Niudun Wang}
\address{Department of Mathematics, University of Wisconsin --
  Madison, 480 Lincoln Dr., Madison, WI 53706, USA}
\email{\href{mailto:spark483@math.wisc.edu}{spark483@math.wisc.edu}}
\email{\href{mailto:nwang66@math.wisc.edu}{nwang66@math.wisc.edu}}
\date{\today}

\maketitle

\begin{abstract}
We show that if the quadratic twist family of a given elliptic curve over $\F_q[t]$ with $\mathrm{Char}(\F_q) \geq 5$ has an element whose $\neron$ model has a multiplicative reduction  away from $\infty$, then the average $p$-Selmer rank is $p+1$ in large $q$-limit for almost all primes $p$.
\end{abstract}

\section{Introduction}
Let $\F_q$ be a finite field with $\mathrm{Char}(\F_q) \geq 5$, let $C = \mathbb{P}^1/\F_q$ and $K=\F_q(C)=\F_q(t)$. Say $E: y^2= x^3+A(t)x+B(t)$ is a non-isotrivial elliptic curve defined over $\F_q[t]$. Define the canonical (naive) height of the elliptic curve as follows, where $E'$ is any elliptic curve isomorphic to $E$ of the form $y^2 = x^3 + C(t)x + D(t)$.
\begin{equation*}
    h(E) := \text{inf}_{E' \cong E} \left( \text{max} \{ 3 \deg C, 2 \deg D \} \right)
\end{equation*}
Let $E_f$ be the quadratic twist of $E$ by square-free polynomial $f(t) \in \F_{q}[t]$. 
\begin{equation*}
    E_f: f(t)y^2=x^3+A(t)X+B(t)
\end{equation*}
Let $M(n,q)$ be the set of square-free polynomials over $\F_{q}$ such that $h(E_f) \leq n$.  

Poonen-Rains heuristic shows that the average $p$-Selmer rank of elliptic curves over a global field $k$ is $p+1$(see \cite{Poonen12}.). It is natural to ask whether the same heuristic is reasonable for the family of quadratic twists of a fixed elliptic curve. We denote by $\mathbb{E}_{n,p}$ the average $p$-Selmer rank over those in the family of quadratic twists with canonical height at most $n$, namely:
\begin{equation*}
    \mathbb{E}_{n,p} = \frac{\sum_{f \in M(n,q)}|\Sel E_f|}{|M(n,q)|}
\end{equation*}
In this paper, we show that under certain assumption on the quadratic twist family of the fixed elliptic curve $E$, the average size of p-Selmer groups is  $p+1$ in large $q$ limit. In particular, we can assume for some large enough $q$ such that the discriminant $\Delta_E$ of $E$ splits in $\F_q[t]$, there exists a quadratic twist $E_0$ of $E$ with minimal height among the quadratic twist family.
\begin{theorem}
Let $E$ be an elliptic curve defined over $K=\F_q(C)=\F_q(t)$ such that there exists at least one quadratic twist of $E$ whose \neron model admits a multiplicative reduction away from $\infty$. Let $E_0$ be the quadratic twist of $E$ with minimal height among the family of quadratic twists of $E$. Then for all primes $p \geq 15$, and coprime to $q$ and all local Tamagawa factors of $E_0$, we have the following equation.
\begin{equation*}
    \lim_{n \to \infty} \lim_{q \to \infty} \mathbb{E}_{n,p} = \lim_{n \to \infty} \lim_{q \to \infty} \frac{\sum_{f \in M(n,q)}|\Sel E_f|}{|M(n,q)|} = p+1
\end{equation*}
\end{theorem}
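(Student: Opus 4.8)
The plan is to run the ``geometric Selmer'' argument in the style of recent work of Landesman and of Feng--Landesman--Rains on Selmer groups over function fields: turn the average over $M(n,q)$ into a point count over $\F_q$ on a moduli space of Selmer elements, pass to the large-$q$ limit via the Grothendieck--Lefschetz trace formula so that only the top-dimensional compactly supported cohomology survives, establish \emph{big monodromy} for the relevant $\F_p$-sheaf using the multiplicative-reduction hypothesis, and finally read off the value $p+1$ from the orbit structure of the resulting orthogonal group in the remaining limit $n\to\infty$.

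\textbf{Moduli and sheafification.} First I would fix $n$ and realize $M(n,q)$ as the set of $\F_q$-points of an open subscheme $U_n\subseteq\A^{N}$, where $N=N(n)$ grows linearly in $n$, obtained by removing the (twisted) discriminant locus; over $U_n$ lives a universal family of quadratic twists together with a lisse rank-$2$ $\F_p$-sheaf coming from the $p$-torsion. Imposing the Selmer local conditions fibrewise packages $\Sel E_f$ into the fibres of a scheme $\mathrm{Sel}_n\to U_n$ which, away from a negligible bad locus, is finite over the base of degree equal to the generic Selmer size; here the assumptions that $p$ is coprime to $q$ and to every local Tamagawa factor of $E_0$ are exactly what make the local conditions at the bad places of the family ``generic'' and hence glue, while $\mathrm{Char}(\F_q)\ge 5$ keeps the Weierstrass models and the $p$-torsion group scheme under control. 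Grouping $\F_q$-points by $f$ gives $\sum_{f\in M(n,q)}|\Sel E_f|=|\mathrm{Sel}_n(\F_q)|$.

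\textbf{Trace formula, large $q$, and the orthogonal structure.} By Grothendieck--Lefschetz together with Deligne's weight bounds, dividing by $|M(n,q)|=q^{N}(1+o(1))$ and letting $q\to\infty$ isolates the top-degree compactly supported cohomology of $\mathrm{Sel}_n$; concretely, $\lim_{q\to\infty}\mathbb{E}_{n,p}$ equals the number of top-dimensional, geometrically irreducible, Frobenius-stable components of $\mathrm{Sel}_n$, which by the finiteness above is the number of orbits of the monodromy group $\pi_1(U_n)$ on the generic fibre. One checks that the Frobenius-stability distinction is vacuous once the geometric monodromy is as large as possible, so that geometric and arithmetic orbits agree. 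The cup product on $\h^1$ of the curve, combined with the alternating Weil pairing on the $p$-torsion, endows the generic Selmer fibre with a \emph{symmetric} $\F_p$-bilinear form $Q$ whose rank grows with $n$, so the geometric monodromy lands in the orthogonal group $\Oh(Q)$.

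\textbf{Big monodromy and conclusion.} The crux --- and the step I expect to be the main obstacle --- is to prove that the geometric monodromy is all of $\Oh(Q)$. The multiplicative-reduction hypothesis, together with the minimality of $E_0$, forces a boundary degeneration of the twist family (a nodal fibre over the discriminant divisor, or the semistable fibre at a multiplicative place) with a nonzero vanishing cycle, hence a Picard--Lefschetz transvection in the monodromy; non-isotriviality of $E$ makes the rank-$2$ sheaf $E[p]$ geometrically irreducible for $p$ large, which is where the bound $p\ge 15$ enters, to exclude the small exceptional subgroups of $\GL_2(\F_p)$ and their orthogonal counterparts. Feeding the transvection into the classification of irreducible transvection-generated subgroups --- of $\mathrm{SL}_2(\F_p)$ for $E[p]$ and of $\Oh(Q)$ for the Selmer local system --- one concludes that, as $n\to\infty$, the geometric monodromy exhausts $\Oh(Q)$. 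Then by Witt's theorem the $\Oh(Q)$-orbits on $Q$ are exactly the zero vector together with the fibres of the quadratic form over $\F_p$, so there are $p+1$ of them, and therefore $\lim_{n\to\infty}\lim_{q\to\infty}\mathbb{E}_{n,p}=p+1$. (The same inputs compute all higher moments and identify the limiting distribution with that of Poonen--Rains \cite{Poonen12}, but the first moment alone yields the stated average.)
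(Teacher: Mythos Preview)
Your overall architecture --- moduli space, Grothendieck--Lefschetz, big monodromy, Witt orbit count --- matches the paper's. But there is a genuine gap in how you handle the parameter space, and it is precisely the content of Section~4.

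You treat $M(n,q)$ as the $\F_q$-points of a single open $U_n$ carrying a single lisse $\F_p$-sheaf. This fails for two reasons. First, the rank of $\h^1_{\et}(C_{\bar\F_q},\mathcal E_{f,p})$ is $\deg(M_f)+2\deg(A_f)+4$, which depends on which prime factors of $\Delta_{E_0}$ divide $f$; and the bound $h(E_f)\le n$ translates into different degree bounds on the part of $f$ coprime to $\Delta_{E_0}$ depending on that same data. So $M(n,q)$ is naturally stratified by subsets $J$ of the bad primes of $E_0$, and the sheaf is only lisse (of a $J$-dependent rank) over each stratum. Second, and more seriously: on the stratum where the multiplicative-reduction prime $\pi_0$ of $E_0$ divides $f$, the twist $E_f$ can lose all multiplicative reduction away from $\infty$, and then Hall's big-monodromy theorem is simply unavailable --- your Picard--Lefschetz transvection is gone. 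The paper does not prove big monodromy there; instead it uses the crude pointwise bound $|\Sel E_f|\le p^{2n+4}$ on that stratum and shows that, \emph{because of the height ordering} (and the minimality of $E_0$), the stratum is smaller by a full factor of $q$, so its contribution dies in the $q\to\infty$ limit. This is exactly why the theorem is stated with the height ordering and with $E_0$ minimal; Remark~4.1 explains that ordering by $\deg f$ would make the ``bad'' stratum dominant and the argument would collapse. Your sketch uses neither the minimality of $E_0$ nor the height ordering in any essential way, which is a sign that this step is missing.

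Two smaller points. Hall's theorem does not give that the geometric monodromy is all of $\Oh(Q)$; it gives an index-$\le 2$ subgroup that is not $SO(Q)$. The paper closes this by showing (Lemma~3.12) that already the commutator $[\Oh(Q),\Oh(Q)]$ has exactly $p+1$ orbits once $\dim\ge 5$, so any intermediate group does too; you should not claim full $\Oh(Q)$. Also, your description of $\mathrm{Sel}_n\to U_n$ as ``finite of degree equal to the generic Selmer size'' is off: the fiber over $f$ is the entire $\F_p$-vector space $\h^1_{\et}(C_{\bar\F_q},\mathcal E_{f,p})$, and $\Sel E_f$ is its Frobenius-fixed subspace, so $|\mathrm{Sel}_n(\F_q)|=\sum_f|\Sel E_f|$ holds but not for the reason you state.
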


\begin{remark}
The main theorem shows that for all but finitely many primes $p$, the average $p$-Selmer rank in the large $q$-limit over the family of quadratic twists of $E$ is $p+1$.
\end{remark}

\begin{remark}
While writing the paper, we learned the contemporaneous results from Aaron Landesman on a similar problem. Given a universal family of elliptic curves over $\F_q[t]$ with $q$ coprime to $6n$, the geometric average size of $n$-Selmer group of the universal family is equal to sum of divisors of $n$ as $q \to \infty$. We refer to \cite{Aaron18} for more details. 
\end{remark}

In subsequent sections, we will calculate $\lim_{n \to \infty} \lim_{q \to \infty} \mathbb{E}_{n,k,p}$ as follows. Fix an elliptic curve $E$ over $\F_q[t]$. Let $F_{d,E}$ be the set of square-free polynomials $f$ of degree $d$ over $\bar{\F}_q$ such that $f$ is coprime to $\Delta_E$, the discriminant of $E$. Chris Hall's construction of \'{e}tale $\F_l$-lisse sheaf over $F_{d,E}$ gives the average size of $\Sel(E_f)$ for a subfamily of quadratic twists of $E$. We then order the family of quadratic twists of $E$ by the canonical height $h(E_f)$ which enables us to calculate the average size of $\Sel(E_f)$ in large $q$-limit.

\subsection*{Acknowledgements}
The authors would like to sincerely appreciate Jordan Ellenberg for introducing the problem, patiently explaining various mathematical backgrounds, and giving constructive comments and suggestions on possible ways to approach the problem. The authors would like to thank Chris Hall for explaining via email about the construction of \'{e}tale $\F_p$-lisse sheaf in his paper \cite{CHall}. The authors would also like to thank Aaron Landesman and Soumya Sankar for helpful and insightful discussions.

\section{Monodromy Group}

In this section, we briefly discuss the main machinery used to prove Theorem 1.1. This section follows closely to chapter 2 and 3 of \cite{JEllenberg}. Throughout this paper, we denote by $X_{\bar{\F}_q}$ the base change $X \times_{\F_q} \bar{\F}_q$ where $X$ is a scheme over $\F_q$.

We start with a brief exposition on the moduli space of a family of quadratic twists of $E$ by polynomials $g \in \F_q[t]$ of degree $n$ such that $(g,\Delta_E)=1$. A polynomial $g(x) = a_0x^n+a_1x^{n-1}+\cdots+a_n$ of degree $n$ corresponds to a point in the affine space $\A^{n+1}$ with coordinates $(a_0,a_1,a_2,\dots,a_n)$. Note that the square-free polynomials are parameterized by the set of points on $\A^{n+1}$ where $\mathrm{Disc}(g)$ does not vanish, while $(g,\Delta_E)=1$ amounts to $(a_0,a_1,a_2,\dots,a_n)$ not on the zero locus given by the resultant of $g$ and $\Delta_E$. Thus those square free polynomials $g \in \F_q[t]$ with $(g,\Delta_E)=1$ are parameterized by an open subscheme of $\A^{n+1}$, denoted by $F_n$. It is reasonable to expect that it suffices to compute the average $p$-Selmer rank on the elliptic curves parameterized by the open subscheme $F_n$. We will explain in later sections how we can bound the average $p$-Selmer rank on those quadratic twists parametrized by the complement of $F_n$.

Suppose there exists an \'{e}tale cover $X \rightarrow F_n$ such that the number of $\F_q$-points on the geometric fiber of $X$ at $f \in F_n(\mathbb{F}_{q})$ equals to the size of $\Sel E_f$. Then we have the following equation. 
\begin{equation*}
|X(\F_q)|= \sum_{f\in F_n(\F_q)} |\Sel E_f|
\end{equation*}

On the other hand, the Grothendieck-Lefschetz trace formula gives an explicit equation of the number of $\F_q$-points on $X_{\bar{\F_q}}$(\cite{milneLEC}.)

\begin{equation*}
|X(\F_q)|= \sum_i (-1)^i \Tr \; \Frob \; | \; \h^i_{\et;c}(X_{\bar{\F_q}},\Q_l)|
\end{equation*}

$F_n$ is an open subscheme of $\A^{n+1}$ implies that $|F_n(\F_q)|= q^{n+1}-\Oh(q^n)$. The leading term of $F_n(\F_q)$ is $q^{n+1}$. Hence, computing the average size of $\Sel$ in large $q$-limit amounts to computing the following equation.

\begin{equation*}
\lim_{q \to \infty} \frac{\sum_{f \in F_n(\F_q)}|\Sel E_f|}{|F_n(\F_q)|} =  \lim_{q \to \infty} q^{-(n+1)} |X(\F_q)|
\end{equation*}
The Weil bounds imply that the eigenvalues of the Frobenius action $\Frob$ on $ \h^i_{\et ;c}(X_{\bar{\F}_q};\Q_l)$ have absolute value  bounded by $q^{n+1-\frac{i}{2}}$. Thus for a fixed $n$, if $q$ becomes sufficiently large, any cohomology term other than  $ \h^{0}_{\et ;c}(X_{\bar{\F}_q};\Q_l)$ vanishes. Note that $ \h^{0}_{\et ;c}(X_{\bar{\F}_q};\Q_l)$ is the $\Q_l$ vector space spanned by the irreducible components of $X_{\bar{\F}_q}$. Hence, the following observation holds.
 
\begin{equation}
\lim_{q \to \infty} q^{-(n+1)} |X(\F_q)|= \textrm{\# of geometric irreducible components of $X$ rational over $\F_q$}
\end{equation}

Let $f \in F_n$ be a fixed basepoint.  we have the following short exact sequence.
\begin{equation*}
1 \longrightarrow \pi_1((F_n)_{\bar{\F_q}}, f) \longrightarrow \pi_1(F_n, f) \longrightarrow \mathrm{Gal}(\bar{\F_q}/\F_q) \longrightarrow 1
\end{equation*}

For any $f\in F_n$, the geometric fiber of $X$ at $f$ is an $\F_p$-vector space $X_f$. Observe that $\pi_1(F_n, f)$ acts linearly on $X_f$. Hence we can define the monodromy group of the cover $X \to F_n$ as the image of $\pi_1(F_n, f)$ in $\GL(X_{f})$ and the geometric monodromy group as the image of $\pi_1((F_n)_{\bar{\F_q}},f)$. Denote by $\Gamma$ the monodromy group, and denote by $\Gamma_0$ the geometric monodromy group. This gives us another short exact sequence, where $[q]$ corresponds to the class in $\Gamma/\Gamma_0$ corresponding to the image of the Frobenius $\Frob \in \Gal(\bar{\F}_q / \F_q)$, and $[q]^\mathbb{Z}$ corresponds to a subgroup of $\Gamma/\Gamma_0$ generated by $[q]$.

\begin{equation*}
1 \longrightarrow \Gamma _0 \longrightarrow \Gamma \longrightarrow [q]^\Z \longrightarrow 1
\end{equation*}

We state and prove the following observations, which gives a geometric interpretation of equation (2.1).
\begin{lemma}
The geometric irreducible components of $X$ are in bijection with the orbits of the geometric monodromy group on $X_{f}$.
\end{lemma}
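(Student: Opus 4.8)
The plan is to establish the bijection by relating geometric irreducible components of $X$ to connected components, and then connected components of an étale cover to orbits of the fundamental group action via the standard Galois correspondence for finite étale covers. First I would observe that since $X \to F_n$ is a finite étale cover and $F_n$ is smooth (being an open subscheme of $\A^{n+1}$), $X$ is itself smooth over $\bar{\F}_q$, so its irreducible components coincide with its connected components. Thus it suffices to show that the connected components of $X_{\bar{\F}_q}$ are in bijection with the orbits of $\Gamma_0 = \mathrm{image}(\pi_1((F_n)_{\bar{\F}_q}, f))$ acting on the fiber $X_f$.

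The key step is the equivalence of categories between finite étale covers of the connected scheme $(F_n)_{\bar{\F}_q}$ and finite sets equipped with a continuous $\pi_1((F_n)_{\bar{\F}_q}, f)$-action, given by $Y \mapsto Y_f$ (the geometric fiber over the basepoint). Under this equivalence, $X_{\bar{\F}_q}$ corresponds to the $\pi_1$-set $X_f$. I would then invoke the general fact that this equivalence sends disjoint unions to disjoint unions and, crucially, sends \emph{connected} covers to \emph{transitive} $\pi_1$-sets. Decomposing $X_{\bar{\F}_q} = \bigsqcup_j X^{(j)}$ into its connected components, each $X^{(j)}$ is a connected finite étale cover, hence $X^{(j)}_f$ is a transitive $\pi_1$-set, i.e.\ a single orbit; and since $X_f = \bigsqcup_j X^{(j)}_f$ as $\pi_1$-sets, these orbits are exactly the orbits of $\pi_1((F_n)_{\bar{\F}_q}, f)$ on $X_f$. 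Finally, since $\pi_1((F_n)_{\bar{\F}_q}, f)$ acts on $X_f$ through its image $\Gamma_0$ in $\GL(X_f)$ — which is how $\Gamma_0$ was defined — the orbits of $\pi_1((F_n)_{\bar{\F}_q}, f)$ and of $\Gamma_0$ on $X_f$ coincide, completing the bijection.

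The main obstacle, such as it is, is a bookkeeping point rather than a deep one: one must be careful that the $\F_p$-linear structure on the fibers $X_f$ is compatible with the étale-cover structure, so that "orbits of $\Gamma_0$ in $\GL(X_f)$" genuinely means set-theoretic orbits on the underlying finite set of the fiber. Since $\Gamma_0$ is defined as the image of $\pi_1$ in $\GL(X_f)$ and the action factors through this image by construction, the linear structure is merely extra data carried along and does not affect the orbit decomposition. I would also remark that connectedness of $(F_n)_{\bar{\F}_q}$ (needed for the Galois correspondence to apply with a single basepoint) holds because $F_n$ is a nonempty open subscheme of the irreducible scheme $\A^{n+1}_{\bar{\F}_q}$, hence irreducible and in particular connected.
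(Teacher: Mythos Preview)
Your proposal is correct and takes essentially the same approach as the paper: both arguments rest on the Galois correspondence for finite \'{e}tale covers, identifying connected (equivalently, irreducible) components of $X_{\bar{\F}_q}$ with transitive $\pi_1((F_n)_{\bar{\F}_q},f)$-sets, hence with orbits on the fiber. Your version is somewhat more explicit---you spell out the smoothness step (irreducible $=$ connected), the categorical equivalence, and the passage from $\pi_1$ to its image $\Gamma_0$---whereas the paper's proof is terser and phrases the same content as ``each orbit lies in one component'' plus ``$\pi_1$ acts transitively on the fiber within a component.''
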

\begin{proof} 
Note that $X_{\bar{\F}_q}$ is \'{e}tale over $(F_n)_{\bar{\F}_q}$ because $X$ is \'{e}tale over $F_n$. Hence, $\pi_1((F_n)_{\bar{\F}_q})$ acts on $X_{\bar{\F_q}}$. The group action preserves each irreducible component of $X_{\bar{\F}_q}$ since each component is \'{e}tale over $(F_n)_{\bar{\F}_q}$, hence preserved by the functoriality of $\pi_1$. Therefore, under the action of $\pi_1((F_n)_{\bar{\F}_q})$, each orbit of the geometric monodromy group on $X_{f}$ would lie inside one irreducible component of $X_{\bar{\F}_q}$. On the other hand, $\pi_1(X_{\bar{\F}_q})$ acts transitively on the geometric fibers of $f$ within an irreducible geometric component, which yields the bijection.
\end{proof}

\begin{lemma}
The action of the Frobenius on the geometric components is given by the action of $[q]$ on the orbits of $\Gamma_0$.
\end{lemma}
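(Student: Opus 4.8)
The plan is to trace the two displayed short exact sequences through the Galois correspondence for finite \'{e}tale covers and to check that the bijection of Lemma 2.1 is equivariant not merely for the geometric fundamental group but for the whole arithmetic fundamental group $\pi_1(F_n,f)$; the assertion then falls out of the functoriality of $\pi_1$.

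First I would recall that, since $F_n$ is geometrically connected, the finite \'{e}tale cover $X \to F_n$ is classified by the finite $\pi_1(F_n,f)$-set $X_f$, with connected components of $X$ in bijection with the $\pi_1(F_n,f)$-orbits on $X_f$; restricting along $\pi_1((F_n)_{\bar{\F}_q},f) \hookrightarrow \pi_1(F_n,f)$ classifies $X_{\bar{\F}_q} \to (F_n)_{\bar{\F}_q}$, whose connected components are the $\pi_1((F_n)_{\bar{\F}_q},f)$-orbits on $X_f$, i.e.\ the $\Gamma_0$-orbits, since that action factors through $\Gamma_0$ by definition. This is the content of Lemma 2.1. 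Because $\pi_1((F_n)_{\bar{\F}_q},f)$ is normal in $\pi_1(F_n,f)$, the $\pi_1(F_n,f)$-action on $X_f$ descends to an action on the set of $\Gamma_0$-orbits; and because the normal subgroup acts trivially on its own orbit set, this descended action factors through $\pi_1(F_n,f)/\pi_1((F_n)_{\bar{\F}_q},f) = \Gal(\bar{\F}_q/\F_q)$, and equally through $\Gamma/\Gamma_0 = [q]^{\Z}$. Since both descended actions are induced by the single action of $\pi_1(F_n,f)$ on $X_f$, they agree under the identification $\Gal(\bar{\F}_q/\F_q) \twoheadrightarrow [q]^{\Z}$ carrying $\Frob$ to $[q]$.

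It then remains to match this combinatorial action of $\Gal(\bar{\F}_q/\F_q)$ on the set of $\Gamma_0$-orbits with the geometric action of Frobenius on the components of $X_{\bar{\F}_q}$ --- that is, the action induced by base change along $\Spec \bar{\F}_q \to \Spec \F_q$, which is the one relevant to Equation (2.1). This is the standard compatibility between the arithmetic fundamental group of a scheme over $\F_q$ and the descent description of the fundamental group of its base change; unwinding it shows that $\Frob$ permutes the geometric components exactly as $[q]$ permutes the $\Gamma_0$-orbits on $X_f$.

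The hard part is not any estimate but keeping this last identification honest: one must pin down that ``Frobenius acting on the geometric components'', as used in Lemma 2.1 and in (2.1), is precisely the operation obtained from the sequence $1 \to \pi_1((F_n)_{\bar{\F}_q},f) \to \pi_1(F_n,f) \to \Gal(\bar{\F}_q/\F_q) \to 1$ by lifting $\Frob$ to $\pi_1(F_n,f)$ and acting on $X_f$, rather than some a priori different geometric operation; and one must check that the choice of basepoint $f$ (taken $\F_q$-rational, or otherwise handled compatibly) does not change the resulting permutation beyond the conjugacy that is irrelevant for counting rational components. Once the basepoints and the two exact sequences are set up compatibly, the lemma is a formal consequence of the material already invoked in the proof of Lemma 2.1.
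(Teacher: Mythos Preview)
Your proposal is correct and is essentially a careful unpacking of what the paper asserts in a single sentence: the paper merely says that the lemma ``comes directly from the bijection'' of Lemma~2.1, whereas you have spelled out why the $\pi_1(F_n,f)$-action on $X_f$ descends through the normal subgroup to a $\Gal(\bar{\F}_q/\F_q)$-action on $\Gamma_0$-orbits that agrees with the geometric Frobenius on components. There is no difference in approach, only in the level of detail supplied.
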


This comes directly from the bijection between the geometric irreducible components of $X$ and the orbits of $\Gamma_0$. Therefore, in order to compute the number of geometrically irreducible components of $X$, it suffices to understand the geometric monodromy group $\Gamma_0$ and compute the number of $\Gamma_0$-orbits on $X_f$ which are fixed by $[q]$. Therefore, equation (2.1) can be rewritten as follows.

\begin{equation}
    \lim_{q \to \infty} q^{-(n+1)} |X(\F_q)|= \textrm{\# of orbits of $\Gamma_0$ fixed by $[q]$}
\end{equation}

\section{Construction of Moduli Space} 

\subsection{Cohomology Groups of \neron Models}
In this subsection, we prove several claims which will help us with constructing the desired moduli space discussed in remark 2.3. 

Let $q$ be a power of prime $q = q_0^k$ such that $q_0$ is not divisible by 2 and 3. Fix an algebraic closure $\F_q \rightarrow \bar{\F_q}$. Let $C/\F_q = \mathbb{P}^1/\F_q$, and let $K=\F_q(C) = \F_q(t)$ be its function field. Fix an elliptic curve $E$ over $K$, and let $E_f$ be the quadratic twist of the elliptic curve by $f \in \Conf(\F_q)$. Let $\E \rightarrow C$ be the \neron model for the elliptic curve $E$. For any prime $p$ that is invertible in $K$, the multiplication by $p$ map on $E_f(K)$ extends uniquely to an isogeny  $\times p: \E \rightarrow \E$. Define $\E_{p}$ to be the kernel of $\times p$.

We state a result from \cite{KCesnavicius}, which states that the first cohomology group of $\mathcal{E}_{p}$ and $\Sel$ are isomorphic under certain arithmetic conditions. First, we state the following lemma from Appendix B of \cite{KCesnavicius}.

\begin{lemma}
Let $S$ be a connected Noetherian normal scheme of dimension $\leq 1$. Let $\bar{K}$ be the function field of $S$, and for every point $s \in S$, let $k(s)$ be the residue field of $s$. Let $A \to \mathrm{Spec} \bar{K}$ and $B \to \mathrm{Spec} \bar{K}$ be abelian varieties, and let $\mathcal{A} \to S$ and $\mathcal{B} \to S$ be their \neron models. Suppose $\phi: A \to B$ is a $\bar{K}$-isogeny of abelian varieties. Denote by $\tilde{\phi}: \mathcal{A} \to \mathcal{B}$ the map induced on \neron models over $S$. If $\mathcal{A}$ has semiabelian reduction at all the nongeneric $s \in S$ with $\mathrm{Char}(k(s)) | \deg \phi$, then $\tilde{\phi}: \mathcal{A} \to \mathcal{B}$ is flat.
\end{lemma}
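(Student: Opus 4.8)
The plan is to check flatness pointwise and to reduce, via \emph{miracle flatness}, to a quasi-finiteness statement on fibres, which can then be settled using the structure theory of group schemes over a field. As a \neron model, $\mathcal{A}$ (and likewise $\mathcal{B}$) is smooth and of finite type over $S$; since a connected normal Noetherian scheme of dimension $\leq 1$ is regular, $S$ is regular and hence so are $\mathcal{A}$ and $\mathcal{B}$. In particular $\mathcal{A}$ is Cohen--Macaulay and $\mathcal{B}$ is regular. Moreover $\mathcal{A}$ and $\mathcal{B}$ are flat of finite type over the integral scheme $S$ with integral generic fibres $A$ and $B$, so an associated-points argument makes them integral of dimension $g + \dim S$, where $g := \dim A = \dim B$. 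By the miracle flatness criterion (a finite-type morphism from a Cohen--Macaulay scheme to a regular scheme, all of whose fibres have the expected dimension, is flat), and since $\dim \mathcal{A} = \dim \mathcal{B}$, it suffices to show that every fibre of $\tilde\phi$ is $0$-dimensional, i.e.\ that $\tilde\phi$ is quasi-finite; the requisite equalities of local dimensions are then automatic, as a quasi-finite morphism preserves the dimension of the closure of a point.

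Over the generic point of $S$ the morphism $\tilde\phi$ restricts to the isogeny $\phi\colon A \to B$, which is finite, so there is nothing to prove there. For a closed point $s \in S$, the induced map $\tilde\phi_s \colon \mathcal{A}_s \to \mathcal{B}_s$ is a homomorphism of smooth commutative group schemes of finite type over the field $k(s)$, and $\tilde\phi_s$ is quasi-finite if and only if its kernel is finite over $k(s)$ (translate along $\mathcal{A}_s$). So the problem comes down to the finiteness of $\ker \tilde\phi_s$.

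Set $n := \deg \phi$. By Deligne's theorem the finite commutative group scheme $\ker\phi$, of order $n$, is killed by $n$; hence $\ker\phi \subseteq A[n]$ and there is an isogeny $\psi\colon B \to A$ with $\psi \circ \phi = [n]_A$. Since $\tilde\psi \circ \tilde\phi$ and $[n]_{\mathcal{A}}$ are both $S$-morphisms $\mathcal{A} \to \mathcal{A}$ extending $[n]_A$, uniqueness in the \neron mapping property forces $\tilde\psi \circ \tilde\phi = [n]_{\mathcal{A}}$, so $\tilde\psi_s \circ \tilde\phi_s = [n]_{\mathcal{A}_s}$ and it is enough to prove that $[n]_{\mathcal{A}_s}$ is quasi-finite. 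The component group $\mathcal{A}_s / \mathcal{A}_s^0$ being finite, this reduces to quasi-finiteness of $[n]$ on $\mathcal{A}_s^0$; passing to $\overline{k(s)}$ and applying Chevalley's theorem, $\mathcal{A}_s^0$ is an extension $1 \to T \times U \to \mathcal{A}_s^0 \to A_0 \to 1$ with $A_0$ an abelian variety, $T$ a torus, and $U$ a commutative unipotent group. Multiplication by $n$ is an isogeny on $T$ and on $A_0$ in every characteristic; on $U$ it is an isomorphism when $\mathrm{Char}(k(s)) \nmid n$, and it annihilates $U$ (which is killed by a power of $\mathrm{Char}(k(s))$) when $\mathrm{Char}(k(s)) \mid n$. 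The hypothesis is exactly what is needed: whenever $\mathrm{Char}(k(s)) \mid n = \deg\phi$, the \neron model $\mathcal{A}$ has semiabelian reduction at $s$, so $\mathcal{A}_s^0$ is semiabelian and $U = 0$. In all cases $[n]$ has finite kernel on each graded piece, and a snake-lemma dévissage shows $\ker([n]_{\mathcal{A}_s^0})$ is finite, completing the argument.

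The main obstacle is the last step: recognising that the sole obstruction to quasi-finiteness of $\tilde\phi_s$ is a unipotent part of $\mathcal{A}_s^0$ being annihilated by $[n]$ in residue characteristic dividing $\deg\phi$, and that the semiabelian hypothesis removes precisely this. Some care is needed because the residue fields $k(s)$ may be imperfect, so the Chevalley decomposition and the $p$-torsion structure of $U$ should be read off over $\overline{k(s)}$, and one must also justify the factorisation $\psi \circ \phi = [n]_A$ (Deligne's annihilation theorem) and the passage to \neron models (uniqueness of the extension). The dimension-theoretic bookkeeping feeding into miracle flatness is routine but deserves attention when $\dim S = 1$.
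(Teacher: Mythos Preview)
The paper does not prove this lemma; it simply cites lemma~B.4 of \v{C}esnavi\v{c}ius. Your argument is correct and is essentially the one found in that reference: reduce flatness of $\tilde\phi$ to quasi-finiteness via miracle flatness (using that $\mathcal{A}$ is Cohen--Macaulay, $\mathcal{B}$ is regular, and both have the same dimension), bound $\ker\tilde\phi_s\subseteq\mathcal{A}_s[n]$ using the factorisation $\tilde\psi\circ\tilde\phi=[n]_{\mathcal{A}}$ coming from Deligne's annihilation theorem and the \neron mapping property, and then control $\mathcal{A}_s[n]$ via the Chevalley decomposition of $(\mathcal{A}_s^0)_{\overline{k(s)}}$. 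Two cosmetic remarks. First, when $p=\mathrm{char}(k(s))\mid n$, the map $[n]$ need not literally annihilate a general smooth connected commutative unipotent $U$ (for instance $[p]$ is nonzero on the length-two Witt group $W_2$); what is true, and all your argument requires, is that $[n]$ has positive-dimensional kernel on any $U\neq 0$ (since $U$ contains a copy of $\mathbb{G}_a$, on which $[p]$ vanishes), so the semiabelian hypothesis $U=0$ is exactly what forces $\mathcal{A}_s^0[n]$ to be finite. Second, the local dimension equality feeding into miracle flatness is most cleanly justified by first using smoothness of $\mathcal{A}$ and $\mathcal{B}$ over $S$ (of the same relative dimension $g$) to reduce to the equality $\dim\mathcal{O}_{\mathcal{A}_s,x}=\dim\mathcal{O}_{\mathcal{B}_s,y}$ in the fibre over $s$; there, over the field $k(s)$, quasi-finiteness of $\tilde\phi_s$ makes $k(x)/k(y)$ algebraic, hence preserves transcendence degree over $k(s)$ and thus codimension in the equidimensional fibres $\mathcal{A}_s$ and $\mathcal{B}_s$.
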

\begin{proof}
See \cite{KCesnavicius} lemma B.4.
\end{proof}

In particular, the lemma above shows that the map $\times p: \E \to \E$ is flat if $q$ is coprime to $p$. Using this implication, we can state the following application of Proposition 5.4 from \cite{KCesnavicius}.

\begin{theorem}
Let $\phi: A \to B$ be a morphism of abelian varieties. Let $\mathcal{A}$ and $\mathcal{B}$ be their \neron models. Denote by $A[\phi]$ the kernel of $\phi: A \to B$, and denote by $\mathcal{A}[\phi]$ the kernel of $\phi: \mathcal{A} \to \mathcal{B}$. Let $\mathrm{Sel}_\phi A$ be the $\phi$-Selmer group of $A$. Suppose the morphism $\phi: \mathcal{A} \to \mathcal{B}$ is flat. If $\deg \phi$ is coprime to any local Tamagawa factors of $A$ and $B$, and $2$ does not divide $\deg \phi$, then the following equation holds inside $\h^1_{\fppf}(K, A[\phi])$.
\begin{equation*}
    \h^1_{\fppf}(S, \mathcal{A}[\phi]) = \mathrm{Sel}_\phi A
\end{equation*}
\end{theorem}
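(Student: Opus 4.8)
The plan is to deduce this as a direct consequence of Proposition 5.4 of \cite{KCesnavicius}, which expresses the $\phi$-Selmer group of an abelian variety as a flat cohomology group of the \neron model over $S$ under precisely the hypotheses we have imposed. The main point is to check that our setup — $S$ a connected regular (hence normal) one-dimensional scheme, $A, B$ abelian varieties over $\bar{K} = K$, $\phi$ an isogeny with $\mathcal{A} \to \mathcal{B}$ flat, $\deg\phi$ odd and coprime to the local Tamagawa numbers — matches the hypotheses of that proposition, so that no genuinely new argument is required beyond assembling the pieces.

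First I would recall the definition of the $\phi$-Selmer group as the subgroup of $\h^1_{\fppf}(K, A[\phi])$ consisting of classes whose restriction to each completion $K_v$ lies in the image of the local Kummer map $B(K_v)/\phi(A(K_v)) \hookrightarrow \h^1_{\fppf}(K_v, A[\phi])$. The key comparison is between this adelically-defined object and the global flat cohomology $\h^1_{\fppf}(S, \mathcal{A}[\phi])$, where $\mathcal{A}[\phi]$ is the (finite flat, by the flatness hypothesis on $\tilde\phi$ together with Lemma 3.3) kernel group scheme over $S$. The natural map $\h^1_{\fppf}(S, \mathcal{A}[\phi]) \to \h^1_{\fppf}(K, A[\phi])$ is injective because $S$ is regular of dimension one; the content is that its image is exactly $\mathrm{Sel}_\phi A$.

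Next I would verify the local conditions place by place. At a place $v$ where $A$ has good reduction and $v \nmid \deg\phi$, the group scheme $\mathcal{A}[\phi]$ is finite étale over the strictly henselian local ring, and $\h^1$ of the local ring injects onto the unramified classes, which coincide with the image of the Kummer map — this is the standard local computation. At the remaining places, one uses that $\deg\phi$ is coprime to the Tamagawa factor $c_v = [\mathcal{A}(\mathcal{O}_v) : \mathcal{A}^0(\mathcal{O}_v)]$: this coprimality forces the Kummer image in $\h^1_{\fppf}(K_v, A[\phi])$ to agree with the image of $\h^1_{\fppf}(\mathcal{O}_v, \mathcal{A}[\phi])$, which is precisely the computation carried out in \cite{KCesnavicius}, Proposition 5.4, using that $\mathcal{A}$ has the requisite reduction type at the characteristics dividing $\deg\phi$ (guaranteed here, as $p$ is coprime to $q$, so no residue characteristic divides $\deg\phi = p$). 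The oddness of $\deg\phi$ is what lets us avoid the subtleties at the archimedean or real places — irrelevant over a function field — and more importantly is part of the hypotheses under which \cite{KCesnavicius} proves the identification cleanly.

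The \textbf{main obstacle} is purely bookkeeping: one must be careful that the flatness of $\tilde\phi: \mathcal{A} \to \mathcal{B}$ (supplied by Lemma 3.3, since our elliptic curve — or rather the relevant quadratic twist — has semiabelian, indeed multiplicative or good, reduction everywhere away from $\infty$, and $p \nmid q$) genuinely yields that $\mathcal{A}[\phi]$ is a \emph{finite flat} group scheme over all of $S$, including $\infty$, so that $\h^1_{\fppf}(S, \mathcal{A}[\phi])$ is the correct object to compare against. One should also confirm that the statement is being applied with $S = C$ (the full $\mathbb{P}^1$, proper over $\F_q$) rather than an affine open, since the Selmer group sees all places including $\infty$; the semiabelian reduction hypothesis at $\infty$ is automatic for $p \neq q$ by the Néron–Ogg–Shafarevich criterion applied to $E_0[p]$ only if $E_0$ has potentially good reduction there, so care is needed, but the cited proposition's hypotheses are stated exactly to accommodate this. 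Once these compatibilities are in place, the theorem is immediate from \cite{KCesnavicius}, Proposition 5.4.
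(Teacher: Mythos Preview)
Your proposal is correct and takes exactly the same approach as the paper: the paper's entire proof consists of the single line ``See \cite{KCesnavicius} proposition 5.4,'' which is precisely the reference you identify and unpack. Your additional discussion of the local verification and the flatness bookkeeping is extra detail beyond what the paper provides, but it is consistent with how that proposition is proved, so nothing you wrote is in conflict with the paper's argument.
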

\begin{proof}
See \cite{KCesnavicius} proposition 5.4.
\end{proof}

More specifically, the theorem above implies that when $p$ is coprime to $2$, $q$, and the local Tamagawa factors of $E$, then there exists an isomorphism between $\h^1_{\fppf}(C, \E_p)$ and $\Sel E$.

In fact, under certain conditions on $E$, we can extend the results of Theorem 3.2 to the family of quadratic twists of $E$. Let $E_f$ be the quadratic twist of the elliptic curve by square-free polynomials $f \in \F_q[t]$. Denote by $\E_f \rightarrow C$ the \neron model for the elliptic curve $E_f$. Let $\E_{f,p}$ be the kernel of multiplication by $p$ map over $\E_f$.

\begin{corollary}
Let $E/K: y^2 = x^3 + Ax + B$ be an elliptic curve, and let $\Delta_E$ be the discriminant of $E$. Suppose that no prime factors $\pi$ of $\Delta_E$ satisfy the condition that $\pi^2 | A$ and $\pi^3 | B$. Let $p$ be a prime such that $p$ is coprime to $2,3,q_0$ and all the local Tamagawa factors of $E$. Then the following isomorphism holds for any square-free polynomial $f \in \F_q[t]$.
\begin{equation*}
\h^1_{\fppf}(C,\E_{f,p})=\Sel E_f
\end{equation*}
\end{corollary}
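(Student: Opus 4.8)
The plan is to deduce this corollary from Theorem 3.2 by verifying its hypotheses for each twist $E_f$, with the main point being a control on the semiabelian (here, semistable) reduction locus of $\E_f$ and on the local Tamagawa factors. First I would recall that Theorem 3.2 applied to $\phi = \times p \colon E_f \to E_f$ gives the desired isomorphism $\h^1_{\fppf}(C,\E_{f,p}) = \Sel E_f$ provided three things hold: (i) the map $\times p \colon \E_f \to \E_f$ on \neron models is flat; (ii) $p$ is coprime to $2$ (and here also to $3$, $q_0$); and (iii) $p$ is coprime to all local Tamagawa factors of $E_f$. Condition (ii) is part of the hypothesis. For (i), by Lemma 3.1 (with $S = C$, $\bar K = K$, $A = B = E_f$, $\phi = \times p$), it suffices to check that $\E_f$ has semiabelian reduction at every closed point $s \in C$ whose residue characteristic divides $\deg(\times p) = p^2$; since $p$ is coprime to $q_0 = \mathrm{Char}(\F_q)$, there are \emph{no} such closed points, so the flatness hypothesis of Lemma 3.1 is vacuous and $\times p \colon \E_f \to \E_f$ is automatically flat. (One could alternatively invoke the sentence after Lemma 3.1, which already records exactly this.)

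The remaining work is condition (iii): showing that, under the stated hypothesis on $\Delta_E$ (no prime $\pi \mid \Delta_E$ has $\pi^2 \mid A$ and $\pi^3 \mid B$), the local Tamagawa factors of $E_f$ are — up to primes dividing $q_0$ and up to the Tamagawa factors of $E$ itself — controlled well enough that a prime $p$ coprime to $2,3,q_0$ and to the Tamagawa factors of $E$ is automatically coprime to the Tamagawa factors of $E_f$. Concretely, I would argue place by place. At a closed point $v \nmid f \Delta_E \infty$, both $E$ and $E_f$ have good reduction, so the Tamagawa number is $1$. At $v \mid f$ but $v \nmid \Delta_E$: the hypothesis on $\Delta_E$ forces $E$ to have good reduction at $v$ with a minimal model, and twisting by a uniformizer turns good reduction into additive reduction of a very specific Kodaira type (type $I_0^*$), whose Tamagawa number lies in $\{1,2,4\}$ — hence coprime to $p$ since $p \neq 2$. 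At $v \mid \Delta_E$: the hypothesis that $E$ is "as minimal as possible" at $v$ (no $\pi^2 \mid A$, $\pi^3 \mid B$) ensures the given Weierstrass equation is minimal there; twisting by $f$ either leaves the reduction type essentially unchanged (when $v \nmid f$) or, when $v \mid f$, passes multiplicative reduction $I_n$ to additive reduction $I_n^*$ and additive reduction to another additive type — in all cases Tate's algorithm shows the new Tamagawa number divides $4$ times the old one, or is one of $1,2,3,4$, and in particular its prime divisors lie in $\{2,3\} \cup \{\text{prime divisors of the Tamagawa factors of }E\}$. Assembling these local statements, every prime dividing a Tamagawa factor of $E_f$ divides $2$, $3$, or a Tamagawa factor of $E$, so a prime $p$ avoiding all of those (and $q_0$) satisfies hypothesis (iii) of Theorem 3.2.

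I expect the main obstacle to be making the case analysis at the bad places $v \mid f\Delta_E$ genuinely uniform and complete: one needs the hypothesis on $\Delta_E$ precisely to guarantee minimality of the Weierstrass model at primes of $\Delta_E$ (so that the quadratic twist does not introduce spurious contributions), and one must check via Tate's algorithm that quadratic twisting sends each Kodaira type to a type whose Tamagawa number picks up only the prime $2$ (and at worst the factor already present, plus possibly $3$ in the type $IV$/$IV^*$ or $I_0^*$ cases). The behavior at $\infty$ requires care as well, but since the conclusion is an identity of groups over all of $C$ and Theorem 3.2 already handles $\infty$ through the Tamagawa-coprimality hypothesis, the same local bookkeeping applies there. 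Once the table of "how quadratic twisting changes the Tamagawa number" is in place — which is classical, e.g. via the Kodaira–\néron classification — the corollary follows by simply quoting Theorem 3.2 for each $E_f$.
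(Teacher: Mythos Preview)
Your proposal is correct and follows essentially the same route as the paper: verify the hypotheses of Theorem 3.2 for each $E_f$ by using Lemma 3.1 (flatness is vacuous since $p\nmid q_0$) and by running a place-by-place analysis via Tate's algorithm to show that every Tamagawa factor of $E_f$ is either a Tamagawa factor of $E$ or lies in $\{1,2,3,4\}$. Your write-up is in fact a bit more explicit than the paper's (you name the Kodaira types $I_0^*$, $I_n^*$ that arise from twisting and spell out the flatness reduction), but the underlying argument is identical.
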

\begin{proof}
For an explicit calculation of local Tamagawa factors using Tate's algorithm, see \cite{Silverman91} Chapter 4 Section 9. Say $f=(\pi_1\dots \pi_s)g$, where $J=\{\pi_1\dots \pi_s\}$ are prime factors of $\Delta_E$ and $(g, \Delta_E)=1$, We first note that the conditions on $E$ imply that the discriminant of the twist $E_f$ is equal to $f^6 \Delta_E$. Tate's algorithm shows $E_f$ has additive reduction on all primes $\pi$ dividing $g$. Therefore, all local Tamagawa factors arising from such $\pi$'s are at most $4$. 

On the other hand, the additive reductions $\pi_i \in J$ of $E$ will stay as additive reductions of $E_f$, while the multiplicative reductions $\pi_j \in J$ will all become additive reductions of $E_f$.  For all the other primes $\rho | \Delta_E$ but not in $J$, $v_\rho(\Delta_E)$ and $v_\rho(\Delta_{E_f})$ are the same. Therefore, for any $\rho | \Delta_E$, no matter weather $\rho$ divides f or not, we will have the local Tamagawa factor of $E_f$ at $\rho$ either equals to the local
Tamagawa factor of $E$ or equals to 1,2,3. Then we can apply theorem 3.2. to $E_f$.

\end{proof}

\begin{remark}
For such an elliptic curve $E$ in the above Corollary, we can actually conclude that if the \neron model of $E$ itself has no multiplicative reduction away from $\infty$, there is no quadratic twists of $E$ whose \neron model admits a multiplicative reduction away from $\infty$. This follows exactly from the fact that additive reductions $\pi_i \in J$ of $E$ will stay as additive reductions of $E_f$.

\end{remark}

The theorem hence implies that for all but finitely many primes $p$, the following isomorphism holds.
\begin{equation*}
\h^1_{\fppf}(C,\E_{f,p})=\Sel E_f
\end{equation*}

Since $\E_{f,p}$ is a smooth commutative group scheme, we know that $\h^1_{\fppf}(C,\E_{f,p})$ is isomorphic to $\h^1_{\et}(C,\E_{f,p})$. (See \cite{poonenRPV} Remark 6.6.3) 

We now examine whether there is a way to explicitly compute the size of $\h^1_{\et}(C, \E_{f,p})$. Chris Hall gives an explicit computation of the \'{e}tale cohomology groups of $\E_{f,p}$ over $C_{\bar{\F}_q}$ under certain conditions on the size of the Galois group $\Gal(K(E[p])/K)$.

\begin{definition}
Let $E/K$ be an elliptic curve. The geometric Galois group $H_p$ is the subgroup of the Galois group $\Gal(K(E[p])/K)$ whose fixed field is $(K(E[p]) \cap \bar{\F}_q)/K$ given by adjoining a primitive $p$th root of unity. We say that $E$ has big monodromy at $p$ if the geometric Galois group contains $\mathrm{SL}_2(\F_p)$.
\end{definition}

In fact, for any prime $p \geq 5$, any twist $E_f$ has big monodromy at $p$ if and only if $E$ has big monodromy at $p$. This is because $\mathrm{SL}_2(\F_p)$ does not have index $2$ subgroups, so $K(E_f[p])$ and $k(\sqrt{f})$ are geometrically disjoint extensions of $K$.

\begin{theorem}
Let $C/\F_q$ be a proper smooth geometrically connected curve, and let $K$ be its function field. Let $E/K$ be a non-isotrivial elliptic curve. Then there exists a constant $c(K)$ such that $E$ has big monodromy at $p$ for any $p \geq c(K)$ and $p$ coprime to $\mathrm{Char}(K)$. The constant $c(K)$ is defined as follows.
\begin{equation*}
    c(K) := 2 + \max \{l \; | \; l \; \mathrm{ is} \; \mathrm{prime }, \frac{1}{12} \left(l - (6 + 3e_2 + 4e_3) \right) \leq \mathrm{genus}(C)\}
\end{equation*}
Here, $e_2$ and $e_3$ are constants defined as follows.
\begin{align*}
    e_2 &= \begin{cases}
    1 & \mathrm{if} \; l \equiv 1 \; \mathrm{mod} \; 4 \\
    -1 & \mathrm{otherwise} 
    \end{cases} \\
    e_3 &= \begin{cases}
    1 & \mathrm{if} \; l \equiv 1 \; \mathrm{mod} \; 3 \\
    -1 & \mathrm{otherwise}
    \end{cases}
\end{align*}
\end{theorem}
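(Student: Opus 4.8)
The statement is due to Chris Hall \cite{CHall}; here is the shape of the argument I would carry out. Write $\bar{K} = \bar{\F}_q(C)$, so that $E_{\bar{\F}_q}$ is an elliptic curve over $\bar{K}$ and, unwinding Definition~3.5, the geometric Galois group $H_p$ is precisely the image of $\Gal(\bar{K}^{\mathrm{sep}}/\bar{K})$ acting on $E_{\bar{\F}_q}[p] \cong \F_p^2$. The first thing to note is that the Weil pairing identifies $\det$ of this action with the mod $p$ cyclotomic character, which is trivial on $\Gal(\bar{K}^{\mathrm{sep}}/\bar{K})$ because $\mu_p \subseteq \bar{\F}_q \subseteq \bar{K}$; hence $H_p \subseteq \mathrm{SL}_2(\F_p)$, and the goal becomes to show $H_p = \mathrm{SL}_2(\F_p)$ once $p \geq c(K)$.

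The plan is to argue by contradiction using Dickson's classification of subgroups of $\mathrm{PSL}_2(\F_p)$. Since $c(K) \geq 15$ (already $X_0(l)$ has genus $0$ for $l \in \{5,7,13\}$) we may assume $p \geq 15$. Suppose $H_p \neq \mathrm{SL}_2(\F_p)$; then, as $\mathrm{SL}_2(\F_p)$ is perfect and has no index-$2$ subgroup for $p \geq 5$, the image $\bar{H}_p$ of $H_p$ in $\mathrm{PSL}_2(\F_p)$ is a proper subgroup, so by Dickson's theorem it is a subgroup of a Borel, of the normalizer of a split Cartan, of the normalizer of a non-split Cartan, or one of $A_4$, $S_4$, $A_5$. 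In each case $E_{\bar{\F}_q}$ acquires extra level-$p$ structure defined over $\bar{K}$ (a $\bar{K}$-rational cyclic $p$-isogeny in the Borel case, an unordered pair of such or its non-split analogue in the Cartan-normalizer cases, and the analogous exceptional structure in the last cases); equivalently, the point of $X(1)$ determined by $E$ lifts to a $\bar{K}$-point of the corresponding modular curve $Y$, which is $X_0(p)$, $X^+_{\mathrm{sp}}(p)$, $X^+_{\mathrm{ns}}(p)$, or $X_{A_4}(p)$, $X_{S_4}(p)$, $X_{A_5}(p)$ respectively. Since these curves are proper and $C_{\bar{\F}_q}$ is a smooth curve, such a $\bar{K}$-point extends to a morphism $\phi\colon C_{\bar{\F}_q} \to Y$; composing $\phi$ with $Y \to X(1) = \mathbb{P}^1$ recovers the map $j(E)$, which is non-constant because $E$ is non-isotrivial, so $\phi$ is non-constant as well.

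A non-constant morphism $\phi\colon C_{\bar{\F}_q} \to Y$ forces, by Riemann--Hurwitz, $\mathrm{genus}(Y) \leq \mathrm{genus}(C_{\bar{\F}_q}) = \mathrm{genus}(C)$ whenever $\mathrm{genus}(Y) \geq 1$. So it remains to compare genera. Among the modular curves above $X_0(p)$ has the smallest degree over $X(1)$, namely $p+1$, while the Cartan-normalizer curves have degree $\tfrac{1}{2}p(p\pm1)$ and the exceptional ones degree at most $\tfrac{1}{12}(p^3-p)$; hence the latter have genus of order $p^2$ or $p^3$ and already exceed $\mathrm{genus}(C)$ for every $p \geq c(K)$. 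For $X_0(p)$ one uses the standard formula $\mathrm{genus}(X_0(p)) = 1 + \tfrac{p+1}{12} - \tfrac{\nu_2}{4} - \tfrac{\nu_3}{3} - \tfrac{\nu_\infty}{2}$ with $\nu_\infty = 2$, $\nu_2 = 1 + \left(\tfrac{-1}{p}\right) = 1 + e_2$ and $\nu_3 = 1 + \left(\tfrac{-3}{p}\right) = 1 + e_3$, which simplifies to $\mathrm{genus}(X_0(p)) = \tfrac{1}{12}\big(p - (6 + 3e_2 + 4e_3)\big)$. By the very definition of $c(K)$, for $p \geq c(K)$ this quantity exceeds $\mathrm{genus}(C)$ (in particular it is $\geq 1$), contradicting the Riemann--Hurwitz inequality. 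Therefore $\bar{H}_p = \mathrm{PSL}_2(\F_p)$, so $H_p \cdot \{\pm I\} = \mathrm{SL}_2(\F_p)$, and perfectness of $\mathrm{SL}_2(\F_p)$ forces $H_p = \mathrm{SL}_2(\F_p)$, i.e. $E$ has big monodromy at $p$.

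The main obstacle is the middle step: producing from each entry of Dickson's list an honest non-constant $\bar{K}$-morphism to the correct modular curve, and verifying that every case other than the Borel case already forces $\mathrm{genus}(C)$ far above what $c(K)$ allows, so that the reducible case (corresponding to $X_0(p)$, the smallest-genus quotient) is genuinely the extremal one that pins down the constant. The field-of-definition bookkeeping — identifying $H_p$ as in Definition~3.5 with the geometric fundamental-group image, and keeping track of $K$ versus $\bar{\F}_q K$ — together with the elliptic-point and cusp contributions in the genus formula, are the routine but slightly delicate remaining points. (A shortcut for the group theory, when available, is to exhibit a transvection in $H_p$ from the inertia action at a place of potentially multiplicative reduction, which exists by non-isotriviality, since a subgroup of $\mathrm{SL}_2(\F_p)$ containing a transvection is either reducible or all of $\mathrm{SL}_2(\F_p)$; but this requires $p$ not to divide the valuation of $j$ at such a place, so the classification route above is the cleaner way to obtain exactly the stated $c(K)$.)
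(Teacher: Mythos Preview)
The paper does not actually prove this statement: its entire proof is the single line ``See \cite{CHallSel} theorem 1.1.'' You, by contrast, supply a genuine outline of Hall's argument, and your sketch is correct in structure: identify $H_p$ with the image of the geometric Galois group in $\mathrm{SL}_2(\F_p)$ via the Weil pairing, invoke Dickson's classification of maximal subgroups of $\mathrm{PSL}_2(\F_p)$, translate each case into a non-constant morphism $C_{\bar{\F}_q} \to Y$ for the appropriate modular curve $Y$, and use Riemann--Hurwitz to force $\mathrm{genus}(Y) \leq \mathrm{genus}(C)$; the explicit genus formula you derive for $X_0(p)$ indeed matches the expression defining $c(K)$, and your observation that $c(K) \geq 15$ (since $X_0(l)$ has genus $0$ exactly for $l \in \{2,3,5,7,13\}$) is exactly why the paper later takes $p \geq 15$.

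Two small remarks. First, the relevant reference in the paper's bibliography is \cite{CHallSel}, not \cite{CHall}; these are distinct papers of Hall. Second, the point you flag as ``the main obstacle''---that the non-Borel cases (Cartan normalizers and exceptional images) already yield modular curves of genus strictly larger than $\mathrm{genus}(X_0(p))$ once $p \geq c(K)$---is real but routine: the degrees of $X^{+}_{\mathrm{sp}}(p)$, $X^{+}_{\mathrm{ns}}(p)$ over $X(1)$ are $\tfrac{1}{2}p(p\pm 1)$ as you say, so their genera grow quadratically in $p$, and a short case check for small $p$ (or the transvection shortcut you mention, which for the purpose of ruling out the non-Borel cases needs only the \emph{existence} of a place of potentially multiplicative reduction, guaranteed by non-isotriviality) closes the gap. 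Your final group-theoretic step, deducing $H_p = \mathrm{SL}_2(\F_p)$ from $\bar{H}_p = \mathrm{PSL}_2(\F_p)$ via perfectness, is also correct.
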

\begin{proof}
See \cite{CHallSel} theorem 1.1.
\end{proof}

In particular, let $C = \mathbb{P}^1$ and $K$ be the function field of $C$. Fix a non-isotrivial elliptic curve $E/K$. Let $\{E_f\}$ be the family of quadratic twists of $E$, where $f$ is any square-free polynomial over $\F_q$. The theorem above implies that $E/K$ has big monodromy at $p$ for any prime $p \geq 15$ and coprime to $q$. Hence, for any prime $p \geq 15$ and coprime to $q$, the twist $E_f/K$ also has big monodromy at $p$. Under the aforementioned conditions on $p$, we can give an explicit calculation of the \'{e}tale cohomology group $\h^1(C_{\F_q}, \E_{f,p})$ for any $E_f$.

\begin{lemma}
Let $C/\F_q$ be a proper smooth geometrically connected curve, and let $K$ be its function field. Fix an elliptic curve $E/K$. Let $p$ be a prime such that $E$ has big monodromy at $p$. Then for any square-free polynomial $f$ over $\F_q$, the \'{e}tale cohomology groups of $\mathcal{E}_{f,p}$ over $C_{\bar{\mathbb{F}}_q}$ are $\mathbb{F}_p$-vector spaces with the following dimensions.
\begin{equation*}
    \dim_{\mathbb{F}_p} (\h^i_{\et}(C_{\bar{\F}_q}, \mathcal{E}_{f,p})) = 
    \begin{cases}
    \deg (M_f) + 2 \deg (A_f) - 4 (\mathrm{genus}(C) - 1) & \mathrm{ if } \; i = 1 \\
    0 & \mathrm{otherwise}
    \end{cases}
\end{equation*}
Here, $M_f$ and $A_f$ are the divisors of multiplicative and additive reduction of $\mathcal{E}_{f,p} \to C$.
\end{lemma}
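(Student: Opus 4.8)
The plan is to identify the étale sheaf $\mathcal{E}_{f,p}$ with the pushforward of a lisse sheaf from the locus where $E_f$ has good reduction, reduce the computation to the cohomology of that pushforward, use big monodromy to kill $\h^0$ and $\h^2$, and then pin down $\h^1$ by an Euler characteristic count via Grothendieck--Ogg--Shafarevich. Write $j\colon U\hookrightarrow C$ for the open immersion of the locus over which $\mathcal{E}_{f,p}$ is finite étale of order $p^2$, and let $M_f$, $A_f$ be the multiplicative and additive parts of $C\setminus U$. Over $U$ the sheaf $\mathcal{E}_{f,p}$ is the lisse $\F_p$-sheaf $\mathcal{F}$ attached to the monodromy representation $\rho\colon\pi_1(U_{\bar{\F}_q})\to\mathrm{Aut}(E_f[p])\cong\GL_2(\F_p)$. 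The first step is to check $\mathcal{E}_{f,p}\cong j_*\mathcal{F}$ on $C$: at a bad place $v$ the \neron mapping property gives $\mathcal{E}_{f,p}(\mathcal{O}_{C,v}^{\mathrm{sh}})=E_f(K_v^{\mathrm{sh}})[p]=E_f[p]^{I_v}=(j_*\mathcal{F})_{\bar v}$, so the canonical map $\mathcal{E}_{f,p}\to j_*j^*\mathcal{E}_{f,p}=j_*\mathcal{F}$ is an isomorphism on stalks. Since $C$ is proper, $\h^i_{\et}(C_{\bar{\F}_q},\mathcal{E}_{f,p})=\h^i_{\et}(C_{\bar{\F}_q},j_*\mathcal{F})$, and there is no distinction between ordinary and compactly supported cohomology.

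Next I would dispose of $\h^0$ and $\h^2$. Since $E$ --- hence, as recorded just before Theorem 3.6, also every quadratic twist $E_f$ --- has big monodromy at $p$, the image $\rho(\pi_1(U_{\bar{\F}_q}))$ contains $\mathrm{SL}_2(\F_p)$, whose standard representation on $\F_p^2$ is absolutely irreducible and nontrivial; thus $\mathcal{F}$ has neither invariants nor coinvariants over $U_{\bar{\F}_q}$. Hence $\h^0(C_{\bar{\F}_q},j_*\mathcal{F})=\mathcal{F}^{\pi_1(U_{\bar{\F}_q})}=0$, equivalently $E_f$ has no $\bar{\F}_q(C)$-rational $p$-torsion. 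For $\h^2$ one uses the short exact sequence $0\to j_!\mathcal{F}\to j_*\mathcal{F}\to\bigoplus_{v\notin U}(i_v)_*\mathcal{F}^{I_v}\to 0$, whose skyscraper quotient is acyclic in positive degrees; thus $\h^2(C_{\bar{\F}_q},j_*\mathcal{F})\cong\h^2(C_{\bar{\F}_q},j_!\mathcal{F})=\h^2_c(U_{\bar{\F}_q},\mathcal{F})$, which by Poincaré duality is dual to $\h^0(U_{\bar{\F}_q},\mathcal{F}^\vee(1))=(\mathcal{F}^\vee)^{\pi_1(U_{\bar{\F}_q})}$ (the Tate twist being geometrically trivial) and hence vanishes, since $\mathcal{F}^\vee$ is again irreducible and nontrivial. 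So $\mathcal{E}_{f,p}$ has cohomology concentrated in degree $1$ and $\dim_{\F_p}\h^1=-\chi(C_{\bar{\F}_q},j_*\mathcal{F})$.

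It then remains to compute the Euler characteristic. From the same short exact sequence, $\chi(C_{\bar{\F}_q},j_*\mathcal{F})=\chi_c(U_{\bar{\F}_q},\mathcal{F})+\sum_{v\notin U}\dim_{\F_p}\mathcal{F}^{I_v}$. For the first summand I would invoke the Grothendieck--Ogg--Shafarevich (Euler--Poincaré) formula, legitimate because $p$ is prime to $\mathrm{Char}(\F_q)$: since $\mathrm{Char}(\F_q)\geq 5$, the extension $K(E_f[p])/K$ is tamely ramified at every place --- $E_f$ acquires semistable reduction after a tame extension of degree prime to $\mathrm{Char}(\F_q)$, over which $E_f[p]$ is unramified or generated by $p$-th roots, $p$ being prime to $\mathrm{Char}(\F_q)$ --- so all Swan conductors of $\mathcal{F}$ vanish and $\chi_c(U_{\bar{\F}_q},\mathcal{F})=2\,\chi_c(U_{\bar{\F}_q})=2\bigl((2-2\,\mathrm{genus}(C))-(\deg M_f+\deg A_f)\bigr)$. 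For the local terms, the stalk computation of the first step gives $\dim_{\F_p}\mathcal{F}^{I_v}=1$ at each place of $M_f$ (the unramified line of the Tate parametrization, matching $\mu_p\subset\mathbb{G}_m$ in the special fibre of the \neron model) and $\dim_{\F_p}\mathcal{F}^{I_v}=0$ at each place of $A_f$ (there the special fibre of the \neron model has $p$-torsion of order at most the component group order, which is $\leq 4<p$), so $\sum_{v\notin U}\dim_{\F_p}\mathcal{F}^{I_v}=\deg M_f$. Substituting and simplifying yields the stated formula for $\dim_{\F_p}\h^1$.

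The genuine work is all local, at the bad fibres: establishing $\mathcal{E}_{f,p}\cong j_*\mathcal{F}$, which rests on the \neron mapping property together with $\mathcal{E}_{f,p}$ being quasi-finite étale and separated over $C$ (the latter from flatness of $\times p$, Lemma 3.1, and $p$ being invertible); and the values $\dim_{\F_p}\mathcal{F}^{I_v}\in\{0,1\}$ together with tameness, which is exactly where $\mathrm{Char}(\F_q)\geq 5$ and $p$ large (exceeding every local component group order) are used. One should also reconcile the definitions of $M_f$ and $A_f$ with the reduction behaviour of $\mathcal{E}_{f,p}$ in the degenerate case $p\mid v(\Delta_{E_f})$ at a multiplicative place $v$, where $\mathcal{F}$ is in fact unramified at $v$ and $\mathcal{E}_{f,p}$ finite étale there, so that $v$ contributes to neither divisor; under the running hypotheses on $p$ (coprime to all local Tamagawa factors, cf.\ Corollary 3.3) this case does not arise, so $M_f$ and $A_f$ are simply the multiplicative and additive reduction divisors of $E_f$. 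Everything outside these local points --- the vanishing of $\h^0$, $\h^2$ and the Euler--Poincaré count --- is formal.
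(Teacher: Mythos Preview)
The paper gives no proof of its own here; it simply cites Lemma~5.2 of \cite{CHall}. Your argument is the standard one and is correct in substance: identify $\mathcal{E}_{f,p}$ with $j_*\mathcal{F}$ via the \neron mapping property, kill $\h^0$ and $\h^2$ using irreducibility of the $\mathrm{SL}_2(\F_p)$-representation furnished by big monodromy, and read off $\dim\h^1=-\chi$ from Grothendieck--Ogg--Shafarevich, the Swan terms vanishing by tameness (this is where the ambient hypothesis $\mathrm{Char}(\F_q)\ge 5$ enters). Your local analysis --- inertial invariants of dimension $1$ at multiplicative places and $0$ at additive places, the latter because the component group has order at most $4<p$ --- is also right.

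One point to flag, though: your computation does \emph{not} reproduce the formula as printed. Carrying the arithmetic through,
\begin{equation*}
\chi(C_{\bar\F_q},j_*\mathcal{F}) \;=\; 2\bigl(2-2\,\mathrm{genus}(C)\bigr) - \deg M_f - 2\deg A_f,
\end{equation*}
so $\dim_{\F_p}\h^1 = \deg M_f + 2\deg A_f + 4(\mathrm{genus}(C)-1)$, with the \emph{opposite} sign on the genus term. For $C=\mathbb{P}^1$ this is $\deg M_f+2\deg A_f-4$, which agrees with the degree of the $L$-function (conductor degree minus $4$) and can be checked on the Legendre family, where fibers $I_2,I_2,I_2^*$ give $2+2-4=0$, matching the vanishing of geometric $\h^1$ for that rational elliptic surface. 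The printed statement appears to carry a transcription slip in sign; it is harmless for the applications in Lemma~3.12 and Theorem~3.13, which only need $\dim\h^1\ge 5$ once $\deg f$ is large, but you should record the discrepancy rather than claim your argument recovers the stated expression verbatim.
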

\begin{proof}
See \cite{CHall} lemma 5.2.
\end{proof}

In particular, if $C = \mathbb{P}^1$, then the dimension of $\h^1(C_{\F_q}, \E_{f,p})$ as an $\F_p$-vector space is $\deg(M_f) + 2 \deg(A_f) + 4$ for any twist $E_f$.

\begin{remark}
The Weil pairing on $E_f[p]$ induces a non-degenerate skew-symmetric pairing on $\mathcal{E}_{f,p}$. Hence the Weil pairing induces a non-degenerate symmetric pairing on $\h^1_{\et}(C_{\bar{\mathbb{F}}_q}, \mathcal{E}_{f,p})$ as follows.
\begin{align*}
    \h^1_{\et}(C_{\bar{\mathbb{F}}_q}, \mathcal{E}_{f,p}) \times \h^1_{\et}(C_{\bar{\mathbb{F}}_q}, \mathcal{E}_{f,p}) & \to H^2_{\et}(C_{\bar{\mathbb{F}}_q}, \mathcal{E}_{f,p} \otimes \mathcal{E}_{f,p}) \\
    &\to H^2_{\et}(C_{\bar{\mathbb{F}}_q}, \mathbb{F}_p(1))
\end{align*}
The first map comes from the cup product of cohomology classes and Poincar\'{e} duality, while the second map comes from the induced Weil pairing $\mathcal{E}_{f,p} \times \mathcal{E}_{f,p} \to \mathbb{F}_p(1)$. Note that the following isomorphism holds. (See \cite{milneLEC} Chapter 14.)
\begin{equation*}
    H^2_{\et}(C_{\bar{\mathbb{F}}_q}, \mathbb{F}_p(1)) \cong \mathbb{F}_p
\end{equation*}
Therefore, the Weil pairing on $E_f[p]$ induces a non-degenerate symmetric bilinear pairing of first \'{e}tale cohomology groups.
\begin{equation*}
    \h^1_{\et}(C_{\bar{\mathbb{F}}_q}, \mathcal{E}_{f,p}) \times \h^1_{\et}(C_{\bar{\mathbb{F}}_q}, \mathcal{E}_{f,p}) \to \mathbb{F}_p
\end{equation*}
\end{remark}

Using the above lemma and the Leray spectral sequence, we can derive a relation between \'{e}tale cohomology groups of $\E_{f,p}$ over $C$ and those over $C_{\bar{\F}_q}$.

\begin{theorem}[Leray Spectral Sequence]
Let $\phi: Y \to X$ be a morphism of schemes, and let $\mathcal{F}$ be a sheaf on $Y_{et}$. Then there exists the following spectral sequence.
\begin{equation*}
E^2_{r,s} = \h^r_{\et} (X, R^s \phi_* \mathcal{F}) \Rightarrow \h^{r+s}_{\et} (Y, \mathcal{F})
\end{equation*}
\end{theorem}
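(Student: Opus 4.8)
The plan is to deduce this as a special case of the Grothendieck spectral sequence attached to a composition of derived functors. First I would introduce the two left-exact functors
\begin{equation*}
\phi_* \colon \mathrm{Sh}(Y_{\et}) \longrightarrow \mathrm{Sh}(X_{\et}), \qquad \Gamma(X,-) \colon \mathrm{Sh}(X_{\et}) \longrightarrow \mathrm{Ab},
\end{equation*}
where $\mathrm{Sh}(-)$ denotes the category of abelian sheaves on the small \'{e}tale site. Both of these categories have enough injectives by Grothendieck's theorem on sheaves on a site, so the right derived functors exist; by construction $R^s\phi_*\mathcal{F}$ is the sheafification of $U \mapsto \h^s_{\et}(\phi^{-1}(U),\mathcal{F})$ and $R^r\Gamma(X,-) = \h^r_{\et}(X,-)$, while $R^{r+s}\Gamma(Y,-) = \h^{r+s}_{\et}(Y,-)$.

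Next I would record the identity of functors $\Gamma(Y,-) = \Gamma(X,-)\circ\phi_*$, which is immediate from the definition of the pushforward since $(\phi_*\mathcal{F})(X) = \mathcal{F}(\phi^{-1}(X)) = \mathcal{F}(Y)$. Granting this, the Grothendieck spectral sequence for the composite, applied to $\mathcal{F}$, has precisely the stated $E^2$-page and abuts to $\h^{r+s}_{\et}(Y,\mathcal{F})$, provided its one nonformal hypothesis holds: that $\phi_*$ carries injective objects to objects acyclic for $\Gamma(X,-)$.

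Verifying that hypothesis is the only step requiring an actual argument, and I would handle it by noting that $\phi_*$ admits an exact left adjoint, the \'{e}tale pullback $\phi^*$ (exactness of $\phi^*$ being standard, as it can be checked on stalks). A functor with an exact left adjoint preserves injectives, so $\phi_*\mathcal{I}$ is an injective sheaf on $X_{\et}$ whenever $\mathcal{I}$ is injective on $Y_{\et}$; injective abelian sheaves are flasque, hence acyclic for the global sections functor. Assembling these inputs yields the spectral sequence $E^2_{r,s} = \h^r_{\et}(X, R^s\phi_*\mathcal{F}) \Rightarrow \h^{r+s}_{\et}(Y,\mathcal{F})$. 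Everything apart from the exactness of $\phi^*$ and the resulting preservation of injectives is formal homological algebra; alternatively, one may simply invoke the statement as established in \cite{milneLEC}.
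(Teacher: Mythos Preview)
Your argument is correct: this is the standard derivation of the Leray spectral sequence as an instance of the Grothendieck spectral sequence, with the key nontrivial input being that $\phi_*$ preserves injectives because it has the exact left adjoint $\phi^*$. The paper itself does not give a proof at all; it simply refers the reader to \cite{milneLEC}, theorem 12.7. So your proposal in fact supplies strictly more than the paper does, and your closing remark that one could alternatively just invoke \cite{milneLEC} is exactly what the paper does.
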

\begin{proof}
See \cite{milneLEC} theorem 12.7.
\end{proof}

\begin{lemma}
Fix an elliptic curve $E/K: y^2 = x^3 + Ax + B$ such that no prime factors $\pi$ of $\Delta_E$ satisfy the condition that $\pi^2 | A$ and $\pi^3 | B$. Suppose $p$ is a prime satisfying these conditions.
\begin{enumerate}
    \item $p \geq 15$ 
    \item $(p,\mathrm{Char}(K)) = 1$ 
    \item $p$ does not divide any local Tamagawa factors of $E$.
\end{enumerate}
Then for any quadratic twist $E_f$, the following isomorphism exists.
\begin{equation*}
    H^1_{\et}(C_{\bar{\F}_q}, \mathcal{E}_{f,p})^{\Gal(\bar{\mathbb{F}}_q / \mathbb{F}_q)} \cong \Sel E_f
\end{equation*}
\end{lemma}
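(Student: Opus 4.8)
The plan is to chain together three facts already established above. By Corollary 3.3, the Selmer group is computed by flat cohomology, $\Sel E_f = \h^1_{\fppf}(C,\E_{f,p})$, for every square-free $f$, and since $\E_{f,p}$ is a smooth commutative group scheme this flat cohomology group agrees with $\h^1_{\et}(C,\E_{f,p})$. The hypotheses of Corollary 3.3 hold here: $p \geq 15$ forces $p \notin \{2,3\}$, coprimality of $p$ to $\mathrm{Char}(K) = q_0$ and to the local Tamagawa factors of $E$ is assumed, and the condition on $\Delta_E$, $A$, $B$ is exactly the one imposed there. It therefore remains to produce an isomorphism $\h^1_{\et}(C,\E_{f,p}) \cong \h^1_{\et}(C_{\bar{\F}_q},\E_{f,p})^{\Gal(\bar{\F}_q/\F_q)}$. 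Here I use that, since $C = \mathbb{P}^1$ has genus $0$ and $E$ is non-isotrivial, Theorem 3.6 applies with $p \geq 15 = c(K)$ to give that $E$ --- and hence every quadratic twist $E_f$, as recorded after Definition 3.5 --- has big monodromy at $p$; this is the hypothesis under which the vanishing statements of Lemma 3.7 hold for $\E_{f,p}$.

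For the isomorphism, apply the Leray spectral sequence (Theorem 3.9) to the structure morphism $\phi \colon C \to \Spec \F_q$ with $\mathcal{F} = \E_{f,p}$. As $C$ is proper over $\F_q$ and $\E_{f,p}$ is a constructible $p$-torsion sheaf, proper base change identifies the Galois module underlying the stalk of $R^s\phi_*\E_{f,p}$ with $\h^s_{\et}(C_{\bar{\F}_q},\E_{f,p})$, while $\h^r_{\et}(\Spec \F_q, -) = \h^r(\Gal(\bar{\F}_q/\F_q), -)$ is Galois cohomology, so the spectral sequence reads
\[
E_2^{r,s} = \h^r\bigl(\Gal(\bar{\F}_q/\F_q),\, \h^s_{\et}(C_{\bar{\F}_q},\E_{f,p})\bigr) \;\Longrightarrow\; \h^{r+s}_{\et}(C,\E_{f,p}).
\]
Because $\Gal(\bar{\F}_q/\F_q) \cong \widehat{\Z}$ has cohomological dimension $1$, the terms $E_2^{r,s}$ with $r \geq 2$ vanish, the spectral sequence degenerates at $E_2$, and in total degree $1$ we obtain the short exact sequence
\[
0 \longrightarrow \h^1\bigl(\Gal(\bar{\F}_q/\F_q),\, \h^0_{\et}(C_{\bar{\F}_q},\E_{f,p})\bigr) \longrightarrow \h^1_{\et}(C,\E_{f,p}) \longrightarrow \h^1_{\et}(C_{\bar{\F}_q},\E_{f,p})^{\Gal(\bar{\F}_q/\F_q)} \longrightarrow 0.
\]
By the $i = 0$ case of Lemma 3.7, $\h^0_{\et}(C_{\bar{\F}_q},\E_{f,p}) = 0$, so its Galois $\h^1$ vanishes and the middle map is an isomorphism $\h^1_{\et}(C,\E_{f,p}) \cong \h^1_{\et}(C_{\bar{\F}_q},\E_{f,p})^{\Gal(\bar{\F}_q/\F_q)}$.

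Combining the three identifications gives $\Sel E_f = \h^1_{\fppf}(C,\E_{f,p}) = \h^1_{\et}(C,\E_{f,p}) \cong \h^1_{\et}(C_{\bar{\F}_q},\E_{f,p})^{\Gal(\bar{\F}_q/\F_q)}$, which is the assertion. The argument is entirely formal once the cited inputs are assembled, so there is no serious obstacle; the only points that deserve care are bookkeeping ones: (i) that the numerical hypotheses on $p$ simultaneously fulfill those of Corollary 3.3 and of Theorem 3.6 (hence of Lemma 3.7), and (ii) that forming the Néron model $\E_f$ and its $p$-torsion commutes with the base change along $\bar{\F}_q/\F_q$, so that ``$\E_{f,p}$ over $C_{\bar{\F}_q}$'' appearing in Lemma 3.7 really is the pullback of the sheaf fed into the spectral sequence. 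I expect (ii) --- the behaviour of the Néron model under this non-finite-type extension of the base field --- to be the most delicate point, although it is standard.
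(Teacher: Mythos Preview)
Your argument is correct and follows essentially the same route as the paper: Leray for $C \to \Spec\F_q$, vanishing input from Lemma 3.7, and the identification $\h^1_{\et}(C,\E_{f,p}) \cong \h^1_{\fppf}(C,\E_{f,p}) \cong \Sel E_f$ from Corollary 3.3. The only cosmetic difference is that the paper uses the full strength of Lemma 3.7 (vanishing of $\h^s_{\et}(C_{\bar\F_q},\E_{f,p})$ for all $s\neq 1$) to leave a single nonzero row on the $E_2$-page, whereas you instead invoke the cohomological dimension of $\widehat{\Z}$ to leave two nonzero columns and then kill $E_2^{1,0}$ with the $s=0$ case of Lemma 3.7; both collapse to the same isomorphism $\h^1_{\et}(C,\E_{f,p}) \cong \h^0(\F_q,\h^1_{\et}(C_{\bar\F_q},\E_{f,p}))$.
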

\begin{proof}
Consider the morphism of schemes $\phi: C \to \Spec (\mathbb{F}_q)$. By the Leray spectral sequence, the following spectral sequence exists.
\begin{equation*}
    E^2_{r,s} = \h^r (\mathbb{F}_q, \h^s_{\et}(C_{\bar{\F}_q}, \mathcal{E}_{f,p})) \Rightarrow \h^{r+s}_{\et} (C, \mathcal{E}_{f,p})
\end{equation*}
By lemma 3.7, the cohomology group $\h^r (\mathbb{F}_q, \h^s_{\et}(C_{\bar{\F}_q}, \mathcal{E}_{f,p}))$ is trivial whenever $s \neq 1$. Hence, the entries of the $E^2$ page of the spectral sequence are given as follows.

\begin{equation*}
\begin{tikzpicture}
\matrix (m) [matrix of math nodes,
             nodes in empty cells,
             nodes={minimum width=5ex, minimum height=6ex,
                    text depth=1ex,
                    inner sep=0pt, outer sep=0pt,
                    anchor=base},
             column sep=2ex, row sep=2ex]%
{
r & 0 & \h^r (\mathbb{F}_q, \h^1_{\et}(C_{\bar{\F}_q}, \mathcal{E}_{f,p})) & 0 & \cdots & 0  \\
    [5ex,between origins]
\vdots  & \vdots  & \vdots       & \vdots       & \ddots & \vdots  \\
    [3ex,between origins]
    2   &   0     & \h^2 (\mathbb{F}_q, \h^1_{\et}(C_{\bar{\F}_q}, \mathcal{E}_{f,p})) & 0 & \cdots & 0  \\
   1   &   0     & \h^1 (\mathbb{F}_q, \h^1_{\et}(C_{\bar{\F}_q}, \mathcal{E}_{f,p})) & 0 & \cdots & 0 \\
    [5ex,between origins]
   0   &   0     &  \h^0 (\mathbb{F}_q, \h^1_{\et}(C_{\bar{\F}_q}, \mathcal{E}_{f,p}))           &  0           &   \cdots     &  0    \\
    [3ex,between origins]
        &  0     &  1           &  2           & \cdots &  s           & \strut \\
};
\draw[thick] (m-1-1.north east) -- (m-6-1.east) ;
\draw[thick] (m-6-1.north) -- ($(m-5-6.east)!0.5!(m-6-6.east)$) ;
\end{tikzpicture}
\end{equation*}

The Leray spectral sequence implies the following isomorphism. 
\begin{equation*}
    \h^0 (\mathbb{F}_q, \h^1_{\et}(C_{\bar{\F}_q}, \mathcal{E}_{f,p})) \cong \h^1_{\et}(C, \mathcal{E}_{f,p})
\end{equation*}
Recall the following isomorphism.
\begin{equation*}
    \h^1_{\et}(C, \mathcal{E}_{f,p}) \cong \h^1_{\fppf}(C, \mathcal{E}_{f,p})
\end{equation*}
Note that the $0$-th cohomology group is precisely the fixed subgroup of $\h^1_{\et}(C_{\bar{\F}_q}, \mathcal{E}_{f,p})$ by $\Gal(\bar{\mathbb{F}}_q / \mathbb{F}_q)$.
\begin{equation*}
    \h^1_{\et}(C_{\bar{\mathbb{F}}_q}, \mathcal{E}_{f,p})^{\Gal(\bar{\mathbb{F}}_q / \mathbb{F}_q)} \cong \h^0 (\mathbb{F}_q, \h^1_{\et}(C_{\bar{\mathbb{F}}_q}, \mathcal{E}_{f,p}))
\end{equation*}
Corollary 3.3 implies the following isomorphism holds for all but finitely many $p$.
\begin{equation*}
    \h^1_{\fppf} (C, \mathcal{E}_{f,p}) \cong \Sel E_f
\end{equation*}
Hence, for all but finitely many $p$, the following isomorphism holds.
\begin{equation*}
    \h^1_{\et}(C_{\bar{\F}_q}, \mathcal{E}_{f,p})^{\Gal(\bar{\mathbb{F}}_q / \mathbb{F}_q)} \cong \Sel E_f
\end{equation*}
\end{proof}

\subsection{Construction of \'{E}tale $\F_p$-lisse Sheaf}

In this subsection, we follow through Chris Hall's construction of \'{e}tale $\F_p$-lisse sheaf over a subset of square-free polynomials $f$ of fixed degree over $\F_q$. The construction will help us calculate the average size of $\Sel(E_f)$ for a subfamily of quadratic twists of $E$.

As before, let $C = \mathbb{P}^1$ over $\F_q$, and let $K$ be the function field of $C$. Fix a non-isotrivial elliptic curve $E/K: y^2 = x^3 + Ax + B$. We recall the construction of the space $F_n$ over $\bar{\F}_q$, as mentioned in section 2.
\begin{equation*}
    F_n = \{ f \in \bar{\F}_{q}[t] \; | \; f \; \textrm{is square-free}, \deg f = n, (f, \Delta_E) = 1 \}
\end{equation*}

As constructed in \cite{CHall} (See Chapter 5.3), consider the \'{e}tale $\mathbb{F}_p$-lisse sheaf $\tau_{n,p,E} \to F_n$ whose geometric fiber over $f \in F_n(\mathbb{F}_{q})$ is $\h^1(\Conf_{\bar{\F}_q}, \mathcal{E}_{f,p})$. Note that Chris Hall's construction of $\tau_{n,p,E}$ is an $\F_p$-analogue of Katz's construction of \'{e}tale $\bar{\mathbb{Q}}_p$-lisse sheaves using middle convolutions. We refer to  \cite{Katz98} proposition 5.2.1. for a detailed explanation on the construction of the \'{e}tale $\bar{\mathbb{Q}}_p$-lisse sheaves.

We state the following theorem by Chris Hall, which gives an explicit computation of the geometric monodromy group of $\tau_{n,p,E} \to F_n$.

\begin{theorem}
Let $E$ be an elliptic curve over $K$ such that there exists at least one quadratic twist of $E$ whose \neron model admits a multiplicative reduction away from $\infty$. Let $p$ be a prime such that $E$ has big monodromy at $p$, i.e. $p \geq 15$. Let $O(\h^1_{\et}(\Conf_{\bar{\F}_q}, \mathcal{E}_{f,p}))$ be the orthogonal group of $\h^1_{\et}(\Conf_{\bar{\F}_q}, \mathcal{E}_{f,p})$ which preserves the non-degenerate symmetric bilinear pairing $\mu$. (See Remark 3.7 for the construction of $\mu$.) 

Then the geometric monodromy group of $\tau_{n,p,E} \to F_n$ is isomorphic to a subgroup of the orthogonal group $O(\h^1_{\et}(\Conf_{\bar{\F}_q}, \mathcal{E}_{f,p}))$ of index at most $2$ and is not isomorphic to $SO(\h^1_{\et}(\Conf_{\bar{\F}_q}, \mathcal{E}_{f,p}))$.
\end{theorem}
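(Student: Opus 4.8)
The plan is to pin down $\Gamma_0$, the geometric monodromy group of $\tau_{n,p,E}\to F_n$, by combining three inputs: the pairing of Remark 3.7, which confines $\Gamma_0$ to an orthogonal group; the big monodromy of $E$ at $p$ from Theorem 3.5, which forces $\Gamma_0$ to act irreducibly; and a Picard--Lefschetz analysis of the local monodromy of $\tau_{n,p,E}$ along the boundary of $F_n$, which exhibits explicit reflections in $\Gamma_0$. Write $V:=\h^1_{\et}(\Conf_{\bar{\F}_q},\E_{f,p})$ for the geometric fibre, with its non-degenerate symmetric form $\mu$ and $\dim_{\F_p}V$ as in Lemma 3.8. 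Since the cup-product pairing defining $\mu$ is functorial in $f$, it is preserved by the $\pi_1$-action, so $\Gamma_0\subseteq \mathrm{O}(V,\mu)$ from the start. It then suffices to establish two facts: (i) $\Gamma_0$ contains the spinor kernel $\Omega(V,\mu)$; and (ii) $\Gamma_0$ contains a reflection $r$, i.e.\ an element of determinant $-1$. Granting these, $\langle\Omega(V,\mu),r\rangle$ is an index-$2$ subgroup of $\mathrm{O}(V,\mu)$ not contained in $\mathrm{SO}(V,\mu)$, and since $\Gamma_0$ lies between it and $\mathrm{O}(V,\mu)$ we conclude $[\mathrm{O}(V,\mu):\Gamma_0]\le 2$ and $\Gamma_0\ne\mathrm{SO}(V,\mu)$, which is the assertion.

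For (ii) I would use the Katz--Hall realization of $\tau_{n,p,E}$ as a middle convolution of the rank-$2$ local system attached to $E[p]$ against the Kummer sheaf of the quadratic character, as recalled above and in \cite{Katz98}. The space $F_n$ sits in $\A^{n+1}$ as the complement of the discriminant hypersurface and the $\Delta_E$-resultant hypersurface; a small loop around a generic point of the discriminant hypersurface, where exactly two roots of $f$ collide, acts on $V$ through the Picard--Lefschetz formula as the reflection in a single vanishing cycle $\delta$. This is precisely where the multiplicative-reduction hypothesis enters: the existence of a twist of $E$ whose \neron model has a genuine multiplicative place away from $\infty$ (and hence, by minimality, the relevant degenerations reachable near the boundary) means the input local system carries a place of non-trivial unipotent local monodromy, and middle convolution converts this into honest reflection-type local monodromy of $\tau_{n,p,E}$ rather than a degenerate transformation, so $\delta$ is non-zero and non-isotropic. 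This gives (ii) and simultaneously supplies the reflection generators needed in (i).

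For (i) I would first note that $\Gamma_0$ acts irreducibly on $V$: by Theorem 3.5 (valid since $p\ge 15$ and $p\nmid q$) the geometric Galois group of $K(E[p])/K$ contains $\mathrm{SL}_2(\F_p)$, so the input local system for the convolution is as large as possible, and one checks as in \cite{CHall} that middle convolution against a non-trivial Kummer sheaf preserves irreducibility and yields a local system whose geometric monodromy is not contained in any imprimitive or tensor-decomposable subgroup. At this point $\Gamma_0$ is an irreducible subgroup of $\mathrm{O}(V,\mu)$ over $\F_p$ containing a supply of reflections, and I would invoke the classification of irreducible reflection subgroups of finite orthogonal groups (Zalesskii--Serezhkin / Wagner, as packaged by Hall): such a group contains $\Omega(V,\mu)$ unless the pair $(\dim_{\F_p}V,\,p)$ lies on a short exceptional list. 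These exceptions are excluded here because $p\ge 15$ and, by Lemma 3.8, $\dim_{\F_p}V=\deg M_f+2\deg A_f+4$ grows without bound with $n$ (every simple root of $f$ forces an additive fibre of $\E_{f,p}$), so for the relevant $n$ one is well outside the exceptional range. This yields $\Gamma_0\supseteq\Omega(V,\mu)$ and completes the proof.

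The step I expect to be the main obstacle is the local-monodromy computation in (ii): verifying carefully that the boundary monodromy of $\tau_{n,p,E}$ is a reflection whose vanishing cycle is non-isotropic, and correctly locating where the multiplicative-reduction hypothesis is actually consumed in the middle-convolution bookkeeping. The group-theoretic classification in (i) is essentially a citation once irreducibility and the reflection generators are in place, and irreducibility follows fairly directly from big monodromy of $E$; but the interface between the geometry of the degenerating family $\E_{f,p}\to C$ and the combinatorics of Katz-style middle convolution is the delicate part, and is also where the bound $p\ge 15$ (rather than merely $p\ge 5$) is most likely to be needed in order to keep the local monodromies tame and in reflection form.
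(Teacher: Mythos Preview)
The paper does not supply a proof of this statement at all: its entire argument is the single sentence ``See \cite{CHall} Theorem 5.3.'' What you have written is, in effect, a sketch of Hall's own proof, and the broad strategy you describe---containment in $\mathrm{O}(V,\mu)$ via the cup-product pairing, production of reflections from local monodromy, irreducibility from big monodromy of $E[p]$, and then the Wagner/Zalesski\u{\i}--Serezhkin classification of irreducible reflection subgroups of orthogonal groups---is indeed the architecture of Hall's argument. So there is no discrepancy in approach; you have simply gone further than the paper does.

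One point worth tightening: your paragraph on (ii) conflates two distinct boundary strata. The discriminant hypersurface in $F_n$ (where two roots of $f$ collide) and the resultant hypersurface (where a root of $f$ approaches a bad fibre of $E$) contribute different local monodromies under middle convolution, and the multiplicative-reduction hypothesis is consumed at the latter, not the former: it is the unipotent local monodromy of $E[p]$ at a multiplicative place that, convolved with the quadratic Kummer character, yields a genuine reflection with non-isotropic vanishing cycle. You correctly flag this computation as the main obstacle, so the gap is more in the exposition than in the plan; but as written the sentence ``a small loop around a generic point of the discriminant hypersurface, where exactly two roots of $f$ collide'' followed immediately by the multiplicative-reduction justification does not quite parse geometrically. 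If you intend to replace the citation with a self-contained argument, that is the step to get precisely right.
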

\begin{proof}
See \cite{CHall} Theorem 5.3
\end{proof}

Note that the commutator subgroup of $O(\h^1_{\et}(\Conf_{\bar{\F}_q}, \mathcal{E}_{f,p}))$ is of index $4$. Hence, in order to understand the number of orbits of the desired geometric monodromy group, it suffices to understand the number of orbits of both $O(\h^1_{\et}(\Conf_{\bar{\F}_q}, \mathcal{E}_{f,p}))$ and its commutator subgroup. We finish this section with the following lemma, which describes the number of the orbits of the aforementioned two groups.

\begin{lemma}
Let $V$ be an $\mathbb{F}_p$-vector space of dimension $d$ where $\mathrm{char}(\mathbb{F}_p) \neq 2$. Given a non-degenerate symmetric bilinear pairing $\mu: V \times V \to \mathbb{F}_p$, let $O(V)$ be the orthogonal group of $V$. Then the number of orbits of $O(V)$ is $p+1$, and the number of orbits of the commutator subgroup $[O(V), O(V)]$ is $p+1$ if $d \geq 5$.
\end{lemma}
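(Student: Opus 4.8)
The plan is to count orbits of $O(V)$ acting on $V$ by first observing that the orbit of a vector $v$ is determined by the value $\mu(v,v)$, and then showing conversely that $O(V)$ acts transitively on each fiber $\{v : \mu(v,v) = c\}$. The zero vector forms its own orbit (giving the value $c=0$ together with the nonzero isotropic vectors), and for each $c \in \mathbb{F}_p$ we get at most one orbit of nonzero vectors with $\mu(v,v)=c$; this already gives the bound $p+1$. The content is the transitivity statement: for nonzero $v,w$ with $\mu(v,v)=\mu(w,w)$, there is an orthogonal transformation sending $v$ to $w$. First I would recall Witt's extension theorem for quadratic spaces over a field of characteristic $\neq 2$: an isometry between subspaces extends to an isometry of the whole space. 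Applying this to the one-dimensional subspaces $\langle v\rangle$ and $\langle w\rangle$ with the obvious isometry $v \mapsto w$ yields the desired element of $O(V)$, so the number of orbits is exactly $p+1$ (the $p$ possible nonzero orbits all occur because one can always find a vector of each square-norm, using that $V$ is a non-degenerate space of dimension $\geq 2$ so it represents every element of $\mathbb{F}_p$).

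For the commutator subgroup $[O(V),O(V)]$, I would use the structure theory of orthogonal groups: for $d \geq 5$ the spinor norm and determinant together cut $O(V)$ down to the commutator subgroup $\Omega(V)$, which has index $4$ in $O(V)$ (index $2$ via $\det$, then index $2$ via the spinor norm on $SO(V)$). The key point is that $\Omega(V)$ still acts transitively on each nonzero fiber $\{v : \mu(v,v) = c\}$ when $\dim V \geq 5$. To see this I would take two vectors $v, w$ of the same nonzero norm, find an isometry $g \in O(V)$ with $gv = w$ by Witt as above, and then correct $g$ by an element of the stabilizer of $w$ to land in $\Omega(V)$: the stabilizer of a non-isotropic vector $w$ is the orthogonal group $O(w^\perp)$ of the hyperplane $w^\perp$, which has dimension $d - 1 \geq 4$, and such a group surjects onto the quotient $O(V)/\Omega(V) \cong (\mathbb{Z}/2)^2$ via the restriction of $\det$ and spinor norm — here one uses that reflections in vectors of $w^\perp$ realize both nontrivial values of the spinor norm (since $w^\perp$ of dimension $\geq 4$ contains vectors of every square class, in particular of both square classes mod squares) and that composing two reflections keeps us in $SO$. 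For isotropic $w$ a parallel argument works using the larger stabilizer, or one reduces to the non-isotropic case since in dimension $\geq 5$ each nonzero value $c$, including $c=0$, is attained by some non-isotropic-orthogonal-complement configuration; in any case the nonzero isotropic vectors all lie in one $\Omega(V)$-orbit by transitivity of $\Omega$ on the isotropic lines in dimension $\geq 5$. Hence $[O(V),O(V)] = \Omega(V)$ has the same orbit decomposition on $V$ as $O(V)$, namely $\{0\}$ together with one orbit for each value in $\mathbb{F}_p$, and the count is again $p+1$.

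The main obstacle I expect is the surjectivity of $O(w^\perp) \to O(V)/\Omega(V)$, i.e. checking that the stabilizer of a vector is large enough to absorb both the determinant and the spinor-norm discrepancies; this is exactly where the hypothesis $d \geq 5$ (so that $w^\perp$ has dimension $\geq 4$) is used, and it also requires being slightly careful with the two isometry classes of non-degenerate quadratic forms of a given dimension over $\mathbb{F}_p$ and with the small-dimension anomalies of the spinor norm. A clean way to organize this is to prove directly the lemma: if $\dim V \geq 4$ then $O(V)$ acts transitively on vectors of any fixed norm and, for any such vector $v$, the restriction map $\mathrm{Stab}_{O(V)}(v) \to O(V)/[O(V),O(V)]$ is surjective; the statement for $[O(V),O(V)]$ acting on $V$ with $\dim V \geq 5$ then follows by the standard orbit-counting argument since $\mathrm{Stab}(v) \cong O(v^\perp)$ has dimension $\geq 4$. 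I would also remark that for $d \leq 4$ the commutator subgroup can have strictly more orbits (the isotropic vectors can split), which explains the dimension hypothesis in the statement and why the paper only needs it in the regime $d \geq 5$, consistent with the dimension formula $\deg(M_f) + 2\deg(A_f) + 4$ from Lemma 3.6 together with the multiplicative-reduction hypothesis forcing $\deg(M_f) \geq 1$.
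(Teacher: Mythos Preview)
Your argument is correct and, for the $O(V)$ part, matches the paper's use of Witt's extension theorem. For the commutator subgroup, however, you take a genuinely different route. The paper proceeds by an explicit construction: given two orthogonal vectors $v,w$ of equal norm it completes them to an orthogonal system $\{v,u_1,u_2,u_3,w\}$ with all five vectors of the same norm, writes down two concrete $5\times 5$ permutation-type isometries $\psi,\varphi$ on this subspace, checks by hand that the commutator $\psi\varphi\psi^{-1}\varphi^{-1}$ sends $v$ to $w$, and then extends to $V$ by Witt. Your approach instead identifies $[O(V),O(V)]$ with $\Omega(V)$ via the determinant and spinor norm and then corrects an arbitrary $g\in O(V)$ with $gv=w$ into $\Omega(V)$ by an element of $\mathrm{Stab}(w)\cong O(w^{\perp})$, using that $O(w^{\perp})$ surjects onto $O(V)/\Omega(V)\cong(\Z/2)^2$ once $\dim w^{\perp}\geq 4$. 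Your route is more structural and makes the role of the hypothesis $d\geq 5$ completely transparent (it is exactly what is needed for the stabilizer to hit both the determinant and spinor-norm cosets); the paper's route is more elementary and avoids invoking the spinor norm, at the cost of the somewhat delicate claim that one can find three auxiliary orthogonal vectors of the prescribed norm inside $\{v,w\}^{\perp}$. One small point: your sketch for isotropic $w$ is a bit terse, but the standard fact that $\Omega(V)$ acts transitively on nonzero isotropic vectors once the Witt index is at least $2$ (automatic for $d\geq 5$) fills that in cleanly.
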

\begin{proof}
We first show that the number of orbits of $O(V)$ on $V$ is $p+1$ for any $d$. It suffices to show that the orthogonal group acts transitively on the set of nonzero vectors of a given norm. Suppose $v,w \in V$ are two non-zero vectors of the same norm. Then there exists an isometry $\phi: Span(v) \to Span(w)$ given by $v \mapsto w$. By Witt's Theorem, $\phi$ extends to an isometry $\tilde{\phi}:V \to V$, which proves the claim. Note that the orthogonal group has $1$ orbit on the set of vectors of non-zero norm, and $2$ orbits on the set of vectors norm zero. In the latter case, the two orbits are $\{0\}$ and the set of non-zero vectors of norm zero. 

We now show that the number of orbits of the commutator subgroup $[O(V), O(V)]$ on $V$ is $p+1$ for $d \geq 5$. Again, it suffices to show that the commutator subgroup $[O(V), O(V)]$ acts transitively on the set of nonzero vectors of a given norm. Suppose $v,w \in V$ are two non-zero vectors of the same norm. Then by the aforementioned argument, there exists an isomtery $\tilde{\phi} \in O(V)$ such that $\tilde{\phi}(v) = w$. We want to show that there exists $\tilde{\psi}, \tilde{\varphi} \in O(V)$ such that the following equation holds.
\begin{equation*}
    \tilde{\phi} (v) = \tilde{\psi} \tilde{\varphi} \tilde{\psi}^{-1} \tilde{\varphi}^{-1} (v)
\end{equation*}
It suffices to consider the case when $\mu(v,w) = 0$. If not, then $v = w$ and we can take $\tilde{\phi}$ to be the identity element in $O(V)$. Suppose $v,w$ are orthogonal. Let $\{v,w,u_1, u_2, \cdots, u_{d-2}\}$ be the orthogonal basis of $V$. Without loss of generality, we can assume that the norms of the basis vectors are the same. 

Suppose $d \geq 5$. Let $W$ be the span of $\{v, u_1, u_2, u_3, w\}$. Then consider the following isometries $\psi, \varphi:W \to W$. Here, the matrices are given with respect to the orthogonal basis $\{v, u_1, u_2, u_3, w\}$.
\begin{equation*}
    \psi = \begin{pmatrix}
    0 & 1 & 0 & 0 & 0 \\
    1 & 0 & 0 & 0 & 0 \\
    0 & 0 & 0 & 1 & 0 \\
    0 & 0 & 1 & 0 & 0 \\
    0 & 0 & 0 & 0 & 1
    \end{pmatrix}, \; \; \; \; \; \; \;
    \varphi = \begin{pmatrix}
    1 & 0 & 0 & 0 & 0 \\
    0 & 0 & 1 & 0 & 0 \\
    0 & 1 & 0 & 0 & 0 \\
    0 & 0 & 0 & 0 & 1 \\
    0 & 0 & 0 & 1 & 0
    \end{pmatrix}
\end{equation*}
Note that $\psi^{-1} = \psi$ and $\varphi^{-1} = \varphi$ as isometries over $W$. The matrix form of $\psi \varphi \psi^{-1} \varphi^{-1}$ is given as follows, which implies that $\psi \varphi \psi^{-1} \varphi^{-1}$ maps $v$ to $w$.
\begin{equation*}
    \psi \varphi \psi^{-1} \varphi^{-1} = \begin{pmatrix}
    0 & 0 & 0 & 0 & 1 \\
    0 & 0 & 1 & 0 & 0 \\
    0 & 0 & 0 & 1 & 0 \\
    1 & 0 & 0 & 0 & 0 \\
    0 & 1 & 0 & 0 & 0
    \end{pmatrix}
\end{equation*}
By Witt's theorem, there exist isometries $\tilde{\psi}, \tilde{\varphi} \in O(V)$ such that $\tilde{\psi} \tilde{\varphi} \tilde{\psi}^{-1} \tilde{\varphi}^{-1}$ maps $v$ to $w$. A similar argument as for the case of $O(V)$ shows that the number of orbits of $[O(V),O(V)]$ on $V$ is $p+1$.
\end{proof}

Using the above lemma, we can calculate the average size of $\Sel E_f$ for $f \in F_n(\F_{q})$.

\begin{theorem}
Fix a non-isotrivial elliptic curve $E: y^2 = x^3 + Ax + B$ over $\F_q[t]$ such that there exists at least one quadratic twist of $E$ whose \neron model admits a multiplicative reduction away from $\infty$. Let $E_0$ be the quadratic twist of $E$ with minimal height among the family of quadratic twists of $E$. Let $n$ be an integer such that $n \geq 5$. Let $p$ be a prime such that $p \geq 15$, and coprime to $q$ and all local Tamagawa factors of $E_0$. Then the average size of $\Sel E_f$ for a subfamily of quadratic twists $\{E_f\}_{f \in F_n(\F_{q})}$ is $p+1$ when $q \to \infty$, i.e.
\begin{equation*}
    \lim_{q \to \infty} \frac{\sum_{f \in F_n(\F_{q})} |\Sel(E_f)|}{|F_n(\F_{q})|} = p+1
\end{equation*}
\end{theorem}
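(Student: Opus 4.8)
The plan is to realize $\sum_{f \in F_n(\F_q)}|\Sel E_f|$ as a point count $|X(\F_q)|$ for a finite \'{e}tale cover $X \to F_n$ and then feed it into the monodromy dictionary of Section~2. Take $X$ to be the finite \'{e}tale cover of $F_n$ corresponding, under the equivalence between lisse $\F_p$-sheaves and finite $\pi_1(F_n)$-sets, to the sheaf $\tau_{n,p,E}$; its geometric fiber over $f$ is $V := \h^1_{\et}(C_{\bar{\F}_q},\mathcal{E}_{f,p})$ with its $\pi_1(F_n,f)$-action, and for $f \in F_n(\F_q)$ one has $|X_f(\F_q)| = |V^{\Frob}|$. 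By Lemma~3.9 and Corollary~3.3 (whose hypotheses hold for $E_0$ and the stated $p$, hence for every twist $E_f$ in the family), $V^{\Frob} \cong \Sel E_f$, so $|X(\F_q)| = \sum_{f \in F_n(\F_q)}|\Sel E_f|$. Since $F_n$ is open in $\A^{n+1}$, $|F_n(\F_q)| = q^{n+1}(1 + O(q^{-1}))$, so it suffices to show $\lim_{q\to\infty} q^{-(n+1)}|X(\F_q)| = p+1$; by equation~(2.2) the left-hand side equals the number of $\Gamma_0$-orbits on $V$ fixed by $[q]$.

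Next I would identify the $\Gamma_0$-orbits. Theorem~3.11 places $\Gamma_0$ inside $O(V)$ with index at most $2$, and a subgroup of index at most $2$ contains the commutator subgroup, so $[O(V),O(V)] \subseteq \Gamma_0 \subseteq O(V)$. The rank $d = \dim_{\F_p}V = \deg(M_f) + 2\deg(A_f) + 4$ (Lemma~3.6, $C = \mathbb{P}^1$) satisfies $d \geq n + 4 \geq 9 \geq 5$, because the $n$ distinct geometric points at which the square-free degree-$n$ polynomial $f$ vanishes are bad primes of $E_f$, so $\deg(M_f) + \deg(A_f) \geq n$. Hence Lemma~3.12 applies: $O(V)$ and $[O(V),O(V)]$ each have exactly $p+1$ orbits on $V$, so $\Gamma_0$, sandwiched between them, also has exactly $p+1$ orbits, and since the counts agree no subdivision can occur: the $\Gamma_0$-orbits are exactly the $O(V)$-orbits, namely $\{0\}$, the punctured isotropic cone, and the $p-1$ nonzero level sets of the quadratic form attached to $\mu$.

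The crux is to show that $[q]$ fixes all $p+1$ of these orbits. For this it suffices that the full arithmetic monodromy group $\Gamma$ — not merely $\Gamma_0$ — lies in $O(V)$: then $[q] \in \Gamma/\Gamma_0$ is represented by an element of $O(V)$, which preserves each $O(V)$-orbit, hence each $\Gamma_0$-orbit. To see $\Gamma \subseteq O(V)$, recall from Remark~3.7 that $\mu$ takes values in $\h^2_{\et}(C_{\bar{\F}_q}, \F_p(1))$; for $C = \mathbb{P}^1$ the Kummer-sequence isomorphism $\h^2_{\et}(\mathbb{P}^1_{\bar{\F}_q},\mu_p) \cong \F_p$ is $\Gal(\bar{\F}_q/\F_q)$-equivariant for the \emph{trivial} action, since the Tate twist cancels the weight of $\h^2$ of the curve, whose only contribution is the trivial Galois module $\mathrm{Pic}(\mathbb{P}^1_{\bar{\F}_q})$. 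Thus $\mu$ is $\pi_1(F_n,f)$-equivariant into the trivial module $\F_p$, so the monodromy representation has image in $O(V)$. This is the step to treat carefully: it is precisely what prevents $[q]$ from acting as an orthogonal similitude with multiplier $q \bmod p$, which would fix only $\{0\}$ and the punctured isotropic cone unless $p \mid q-1$ and would wreck the limit.

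Assembling the three steps, equation~(2.2) gives $\lim_{q\to\infty} q^{-(n+1)}|X(\F_q)| = \#\{\,\Gamma_0\text{-orbits fixed by }[q]\,\} = p+1$ (in fact the count is $p+1$ for every admissible $q$), and dividing by $|F_n(\F_q)| = q^{n+1}(1+O(q^{-1}))$ yields $\lim_{q\to\infty}\sum_{f\in F_n(\F_q)}|\Sel E_f|/|F_n(\F_q)| = p+1$, as claimed. The main obstacle is the equivariance argument placing $\Gamma$ in $O(V)$; a secondary, bookkeeping matter is verifying that the hypotheses of Corollary~3.3 and Lemma~3.9 (the condition on $\pi^2 \mid A$, $\pi^3 \mid B$, and coprimality of $p$ to the Tamagawa factors) hold uniformly across the twist family, which is where the minimality of $E_0$ enters.
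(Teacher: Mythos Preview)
Your proposal is correct and follows essentially the same route as the paper's proof: identify $\sum_f |\Sel E_f|$ with the point count of the total space of $\tau_{n,p,E}$, invoke equation~(2.2), use Theorem~3.11 to sandwich $\Gamma_0$ between $[O(V),O(V)]$ and $O(V)$, apply Lemma~3.12 with the dimension bound from Lemma~3.7, and then observe that $[q]$ acts through $O(V)$ and hence fixes each orbit. Your justification that $\Gamma \subseteq O(V)$ via the triviality of the Galois action on $\h^2_{\et}(\mathbb{P}^1_{\bar{\F}_q},\mu_p)$ is exactly the point the paper records more tersely as ``the Frobenius map preserves the pairing,'' and your sandwiching argument makes explicit why computing orbits for both $O(V)$ and its commutator subgroup in Lemma~3.12 suffices.
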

\begin{proof}
Denote by $\{E_f\}$ the family of quadratic twists of $E$. Then note the $E_0$ must have at least one place of multiplicative reduction by the proof of Corollary 3.3. Indeed, $E_0$ satisfies the condition for Corollary 3.3. Note that the quadratic twist families $\{E_f\}$ and $\{(E_0)_g\}$ are equal. Hence, we can apply lemma 3.10 to the subfamily of quadratic twists $\{E_f\}_{f \in F_n(\F_{q})}$.

Since $F_n$ is an open subscheme of $\A^{n+1}$, it holds that $|F_n(\F_{q})| = q^{n+1} + O_{n}(q)$. Then the Grothendieck-Lefschetz trace formula (i.e. Section 2) and lemma 3.10 shows the following equation.
\begin{align*}
    \lim_{q \to \infty} \frac{\sum_{f \in F_n(\F_{q})} |\Sel(E_f)|}{|F_n(\F_{q})|} 
    &= \lim_{q \to \infty} \frac{\sum_{f \in F_n(\F_{q})} |\h^1_{\et}(C_{\bar{\F}_q}, \mathcal{E}_{f,p})^{\Gal(\bar{\mathbb{F}}_q / \F_{q})}|}{|F_n(\mathbb{F}_q)|} \\
    &= \lim_{q \to \infty} \frac{|\tau_{n,p,E}(\F_{q})|}{|F_n(\F_{q})|} \\
    &= \lim_{q \to \infty} \# \textrm{ of orbits of } \Gamma_0 \; \textrm{ fixed  by} \; [q]
\end{align*}
We recall that $\Gamma$ is the image of $\pi_1(F_n)$ in $O(\h^1_{\et}(C_{\bar{\mathbb{F}}_q}, \mathcal{E}_{f,p}))$, and $\Gamma_0$ is the image of $\pi_1((F_n)_{\bar{\F}_q})$ in $O(\h^1_{\et}(C_{\bar{\mathbb{F}}_q}, \mathcal{E}_{f,p}))$. The class $[q]$ is the image of the Frobenius $\mathrm{Frob}_{q} \in \Gal(\bar{\F}_q / \F_{q})$ in $\Gamma/\Gamma_0$.

By theorem 3.11, the geometric monodromy group $\pi_1(({F_n})_{\bar{\mathbb{F}}_q})$ is isomorphic to a subgroup of $O(\h^1_{\et}(C_{\bar{\mathbb{F}}_q}, \mathcal{E}_{f,p}))$ of index at most $2$ and is not $SO(\h^1_{\et}(C_{\bar{\mathbb{F}}_q}, \mathcal{E}_{f,p}))$. Recall from remark 3.8 that the Weil pairing on $E_f[p]$ induces a non-degenerate symmetric bilinear pairing $\mu$ on $\h^1_{\et}(C_{\bar{\mathbb{F}}_q}, \E_{f,p})$.
\begin{equation*}
    \h^1_{\et}(C_{\bar{\mathbb{F}}_q}, \mathcal{E}_{f,p}) \times \h^1_{\et}(C_{\bar{\mathbb{F}}_q}, \mathcal{E}_{f,p}) \to \mathbb{F}_p
\end{equation*}
Hence, the frobenius map $\mathrm{Frob}_{q}$ preserves the pairing on $\h^1_{\et}(C_{\bar{\mathbb{F}}_q}, \mathcal{E}_{f,p})$. We apply lemma 3.12 by setting $V = \h^1_{\et}(C_{\bar{\mathbb{F}}_q}, \mathcal{E}_{f,p})$ and $\mu$ to be the non-degenerate symmetric bilinear pairing induced from the Weil pairing over $E_f[p]$. 

Note that the elliptic curve $E_f$ has additive reduction at all primes $\pi$ dividing $f$. Hence, lemma 3.7 implies that for $n \geq 5$, the dimension of the vector space $V$ is greater than $5$. Hence for $n \geq 5$, the orbits of $\pi_1(({F_n})_{\bar{\mathbb{F}}_q})$ are the sets of non-zero vectors of a fixed norm and the set $\{0\}$. Therefore, $\Frob$ preserves the orbits of $\pi_1(({F_n})_{\bar{\mathbb{F}}_q})$. Hence, the number of orbits of $\pi_1(({F_n})_{\bar{\mathbb{F}}_q})$ fixed by $\mathrm{Frob}_{q}$ is $p+1$ for all $q$.

Hence, we have the following equation, which proves the theorem.
\begin{equation*}
    \lim_{q \to \infty} \frac{\sum_{f \in F_n(\F_{q})} |\Sel(E_f)|}{|F_n(\F_{q})|} = p+1
\end{equation*}
\end{proof}

\section{Main Theorem}

In this section, we prove the main theorem by using theorem 3.13. Fix a non-isotrivial elliptic curve $E: y^2 = x^3 + Ax + B$ over $\F_q[t]$ such that there exists at least one quadratic twist of $E$ whose \neron model admits a multiplicative reduction away from $\infty$. We also assume that $\mathrm{char}(F_q) \geq 5$. Denote by $\Delta_E$  the discriminant of the elliptic curve $E$. We then order the family of quadratic twists $\{E_f\}$ with square-free polynomials $f$ over $\F_{q}$ based on the canonical height of $E_f$. 

The idea of the proof is as follows. Let $E_0$ be the quadratic twist of $E$ with minimal height among the family of quadratic twists of $E$. Then by corollary 3.3, the \neron model of $E_0$ admits a multiplicative reduction away from $\infty$. Note the quadratic twist families $\{E_f\}$ and $\{(E_0)_f\}$ are the same. Hence, we can reorder the family $\{E_f\}$ by $\{(E_0)_f\}$. Such reordering of family of quadratic twists will allow us to ensure that the subfamily of quadratic twists whose $p$-Selmer rank is undetermined does not affect the average size of $p$-Selmer group of $\{E_f\}$ as $q \to \infty$.

\begin{remark}
One may ask whether it is possible to compute the average size of $p$-Selmer groups by ordering the family of quadratic twists $\{E_f\}$ by the degree of the twisting polynomial $f$. The problem with this approach is that Chris Hall's construction of \'{e}tale $\F_p$-lisse sheaf only works for elliptic curves $E$ with at least one multiplicative reduction. Suppose $E$ has at least one place of multiplicative reduction. Then the quadratic twist family $\{E_f\}$ can be decomposed into the following two disjoint sets. 
\begin{equation*}
    \{E_f\} = \{E_f\}_{\{f \; | \; (f, \Delta_E) = 1\}} \sqcup \{E_f\}_{\{f \; | \; (f, \Delta_E) \neq 1\}}
\end{equation*}
The subfamily $\{E_f\}_{\{f \; | \; (f, \Delta_E) = 1\}}$, which dominates the family $\{E_f\}$ when $\deg f \to \infty$, consists of  quadratic twists of $E$ having at least one place of multiplicative reduction, the subfamily on which the average $p$-Selmer rank is known to be $p+1$. 

However, the elliptic curve $E' := E_{\Delta_E}$ has no multiplicative reduction. Note that $\Delta_{E'} = \Delta_E^7$. Contrary to $\{E_f\}$, the family of quadratic twists of $\{E'_f\}$ is given as follows.
\begin{equation*}
    \{E'_f\} = \{E'_f\}_{\{f \; | \; (f, \Delta_{E'}) = (f, \Delta_E) = 1\}} \sqcup \{E_f\}_{\{f \; | \; (f, \Delta_{E'}) = (f, \Delta_E) \neq 1\}}
\end{equation*}
Here, the subfamily $\{E'_f\}_{\{f \; | \; (f, \Delta_E) = 1\}}$, which dominates the family $\{E'_f\}$ when $\deg f \to \infty$, consists of quadratic twists of $E'$ having no multiplicative reductions, the subfamily on which the average size of $p$-Selmer group is unknown.

But notice that $\{E_f\}$ and $\{E'_f\}$ are the same family of quadratic twists. Hence, we cannot determine the average size of $p$-Selmer group by ordering the family of quadratic twists based on the degree of the twisting polynomials.
\end{remark}

Before we present the proof of the main theorem, we state the following definitions and notations.
\begin{definition}
Denote by $F(n)$ the following scheme defined over $\bar{\F}_q$.
\begin{equation*}
    F(n) := \{f \in \bar{\F}_q [t]\; | \; f \in F_d \; \textrm{for} \; d \leq n\} = \bigsqcup_{d=1}^n F_d
\end{equation*}
\end{definition}
As mentioned before $F(n)$ is an open subscheme of $\A^{n+1}$. Hence, the following equation holds

\begin{equation*}
    |F(n)(\F_{q})| = q^{n+1} - \mathrm{O}(q^n)
\end{equation*}

\begin{definition}
Denote by $\tilde{\tau}(n,p,E)$ the sheaf over $F(n)$ obtained by gluing \'{e}tale $\F_p$-lisse sheaves $\{\tau_{d,p,E} \to F_d\}$ for all $d \leq n$.
\end{definition}

The following theorem states an analogue of Theorem 3.12 for the subfamily of quadratic twists $\{E_f\}_{f \in F(n)(\F_{q})}$ such that $E$ satisfies the aforementioned two conditions.

\begin{theorem}
Fix a non-isotrivial elliptic curve $E: y^2 = x^3 + Ax + B$ over $\F_q[t]$ such that there exists at least one quadratic twist of $E$ whose \neron model admits a multiplicative reduction away from $\infty$. Let $E_0$ be the quadratic twist of $E$ with minimal height among the family of quadratic twists of $E$. Let $n$ be an integer such that $n \geq 5$. Let $p$ be a prime such that $p \geq 15$, and coprime to $q$ and all local Tamagawa factors of $E_0$. Then the average size of $\Sel E_f$ for the subfamily of quadratic twists $\{E_f\}_{f \in F(n)(\F_{q})}$ is $p+1$ as $q \to \infty$, i.e.
\begin{equation*}
    \lim_{q \to \infty} \frac{\sum_{f \in F(n)(\F_{q})} |\Sel(E_f)|}{|F(n)(\F_{q})|} = p+1
\end{equation*}
\end{theorem}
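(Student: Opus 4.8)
The plan is to reduce the statement to Theorem 3.13 by stratifying $F(n)$ by degree and showing that the strata of smaller dimension do not affect the large $q$ limit. Recall $F(n) = \bigsqcup_{d=1}^{n} F_d$, with $F_n$ the dense open stratum of dimension $n+1$ in $\A^{n+1}$ and each $F_d$ $(d < n)$ locally closed of dimension $d+1$; hence $|F(n)(\F_q)| = q^{n+1} - O(q^n)$, $|F_n(\F_q)| = q^{n+1} - O(q^n)$, and $|F_d(\F_q)| \le q^{d+1}$ for every $d$. Splitting the sum over the strata,
\begin{equation*}
\sum_{f \in F(n)(\F_q)} |\Sel E_f| = \sum_{f \in F_n(\F_q)} |\Sel E_f| + \sum_{d=1}^{n-1} \sum_{f \in F_d(\F_q)} |\Sel E_f|.
\end{equation*}
Since $n \ge 5$, $p \ge 15$, and $p$ is coprime to $q$ and to the local Tamagawa factors of $E_0$, Theorem 3.13 applies to the first sum and yields $\sum_{f \in F_n(\F_q)} |\Sel E_f| = (p+1)\,|F_n(\F_q)|\,(1 + o(1)) = (p+1)\,q^{n+1} + o(q^{n+1})$ as $q \to \infty$.

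The remaining task is to bound the boundary sum uniformly in $q$. As in the proof of Theorem 3.13, the minimality of $E_0$ makes Corollary 3.3 and Lemma 3.10 applicable to the family $\{E_f\}$, so for every square-free $f$ coprime to $\Delta_E$ one has $\Sel E_f \cong \h^1_{\et}(C_{\bar{\F}_q}, \E_{f,p})^{\Gal(\bar{\F}_q/\F_q)}$, and therefore $|\Sel E_f| \le |\h^1_{\et}(C_{\bar{\F}_q}, \E_{f,p})| = p^{\deg M_f + 2\deg A_f + 4}$ by Lemma 3.7 (here $p \ge 15$ guarantees big monodromy for $E_0$, and hence for every twist). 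For $f$ of degree $d$, the additive locus $A_f$ of $E_f$ is supported on the $d$ zeros of $f$, on the finitely many bad places of $E$, and possibly on $\infty$, while $M_f$ is supported on the bad places of $E$ and possibly on $\infty$; hence $\deg M_f + 2\deg A_f + 4 \le 2d + c_E$ for a constant $c_E$ depending only on $E$, and so $|\Sel E_f| \le p^{2(n-1)+c_E}$ whenever $f \in F_d$ with $d \le n-1$, a bound free of $q$. Summing, $\sum_{d=1}^{n-1} \sum_{f \in F_d(\F_q)} |\Sel E_f| \le p^{2(n-1)+c_E}\sum_{d=1}^{n-1} q^{d+1} = O(q^n) = o(q^{n+1})$. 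Combining this with the previous estimate gives $\sum_{f \in F(n)(\F_q)} |\Sel E_f| = (p+1)\,q^{n+1} + o(q^{n+1})$, and dividing by $|F(n)(\F_q)| = q^{n+1}(1 + o(1))$ and letting $q \to \infty$ proves the claim. In the notation of Definition 4.3 this is the assertion that $|\tilde{\tau}(n,p,E)(\F_q)| = \sum_{d=1}^n |\tau_{d,p,E}(\F_q)|$ is dominated by $|\tau_{n,p,E}(\F_q)|$.

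I expect the only step that is not pure bookkeeping to be the $q$-independent bound on $|\Sel E_f|$ over the boundary strata: each fixed-degree stratum $F_d$ with $d < n$ gains arbitrarily many $\F_q$-points as $q$ grows, so the total boundary contribution stays $O(q^n)$ — and hence gets absorbed by the dominant stratum $F_n$ — only because the Selmer sizes on $F_d$ are bounded by a constant independent of $q$, which is exactly what Lemma 3.7 provides once the identification $\Sel E_f \cong \h^1_{\et}(C_{\bar{\F}_q}, \E_{f,p})^{\Gal(\bar{\F}_q/\F_q)}$ (from Lemma 3.10 and Corollary 3.3, via the minimality of $E_0$) realizes $\Sel E_f$ as a subspace of a finite-dimensional $\F_p$-vector space of controlled dimension. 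Everything else — the stratification of $\A^{n+1}$, the elementary bounds $|F_d(\F_q)| \le q^{d+1}$, and the passage to the limit — is routine.
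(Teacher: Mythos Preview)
Your proof is correct and shares the paper's overall strategy: stratify $F(n)=\bigsqcup_{d=1}^n F_d$ and show that only the top stratum $F_n$ contributes in the large-$q$ limit. The difference is in how the lower strata are dispatched. The paper rewrites the average as $\sum_{d=1}^n \frac{|\tau_{d,p,E}(\F_q)|}{|F_d(\F_q)|}\cdot\frac{|F_d(\F_q)|}{|F(n)(\F_q)|}$ and invokes Theorem~3.13 for \emph{every} $d$, so that each inner ratio tends to $p+1$ while only the weight at $d=n$ survives. You instead apply Theorem~3.13 only at $d=n$ and kill the strata $d<n$ by the crude pointwise bound $|\Sel E_f|\le p^{\,2d+c_E}$ coming from Lemma~3.7 (via the identification of Lemma~3.10), so that the boundary contributes $O_{n,p}(q^n)$ outright. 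Your route is marginally more robust: it never needs Theorem~3.13 for $d<5$, where the dimension hypothesis of Lemma~3.12 is not available, and it makes explicit the $q$-independent Selmer bound that the paper's term-by-term passage to the limit tacitly relies on. The paper's route is terser and highlights that each fixed-degree stratum already averages to $p+1$ on its own. Either way the mechanism is the same and the conclusion follows.
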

\begin{proof}
As stated before, $|F(n)(\F_{q})| = q^{n+1} + O(q^n)$. As in the proof of theorem 3.13, the Grothendieck-Lefschetz trace formula and lemma 3.10 shows the following equation, where $\mathrm{Frob}_{q} \in \Gal(\bar{\F}_q/\F_{q})$. 
\begin{align*}
    \lim_{q \to \infty} \frac{\sum_{f \in F(n)(\F_{q})} |\Sel E_f|}{|F(n)(\F_{q})|} &= \lim_{q \to \infty} \frac{\sum_{d=1}^n \sum_{f \in F_d(\F_{q})} |\Sel E_f|}{\sum_{d=1}^n |F_d(\F_{q})|} \\
    &= \lim_{q \to \infty} \frac{\sum_{d=1}^n \sum_{f \in F_d(\F_{q})} |\Sel E_f|}{\sum_{d=1}^n |F_d(\F_{q})|} \\
    &= \lim_{q \to \infty} \frac{\sum_{d=1}^n \sum_{f \in F_d(\F_{q})} |\h^1_{\et}(C_{\bar{\F}_q}, \mathcal{E}_{f,p})^{\Gal(\bar{\mathbb{F}}_q / \F_{q})}|}{\sum_{d=1}^n |F_d(\F_{q})|} \\
    &= \lim_{q \to \infty} \frac{\sum_{d=1}^n |\tau_{d,p,E}(\F_{q})|}{\sum_{d=1}^n |F_d(\F_{q})|} 
    \left( = \lim_{q \to \infty} \frac{|\tilde{\tau}(n,p,E)(\F_{q})|}{|F(n)(\F_{q})|} \right) \\
    &= \lim_{q \to \infty} \sum_{d=1}^n \left( \frac{|\tau_{d,p,E}(\F_{q})|}{|F_d(\F_{q})|} \frac{|F_d(\F_{q})|}{|F(n)(\F_{q})|} \right) 
\end{align*}
By theorem 3.13, the following equation holds.
\begin{align*}
    \lim_{q \to \infty} \frac{\sum_{f \in F(n)(\F_{q})} |\Sel E_f|}{|F(n)(\F_{q})|} &= \lim_{q \to \infty} \sum_{d=1}^n \left( (p+1) \frac{|F_d(\F_{q})|}{|F(n)(\F_{q})|} \right) \\
    &= \lim_{q \to \infty} (p+1) \frac{|F_n(\F_{q})|}{|F(n)(\F_{q})|} \\
    &= p+1
\end{align*}
\end{proof}

We now order the family of quadratic twist of elliptic curves based on the canonical height of the elliptic curve. Recall that the canonical height of the elliptic curve is given as follows, where $E' : y^2 = x^3 + C(t)x + D(t)$ is an elliptic curve isomorphic to $E$.
\begin{equation*}
    h(E) := \text{inf}_{E' \cong E} \left( \text{max} \{ 3 \deg C, 2 \deg D \} \right)
\end{equation*}
\begin{remark}
There is a unique equation for $E$ of the form $y^2=x^3+Ax+B$ satisfying that for any prime $p \in \F_q[t]$, $p^4|A$ implies $p^6 \nmid B$. In such case, $h(E)$ is equal to $\ \text{max} \{ 3 \deg A, 2 \deg B \}$.

In particular $E $ has the least height among all quadratic twists if and only if for any prime $p \in \F_q[t]$, $p^2|A$ implies $p^3 \nmid B$. Otherwise, there exists a quadratic twist $E_p: py^2=x^3+Ax+B \simeq y^2= x^3+ \frac{A}{p^2}x + \frac{B}{p^3}$, which has smaller height than $E$.
\end{remark}

\begin{remark}
In order to apply the construction of the \'{e}tale $\F_p$-lisse sheaf $\tau_{n,p,E} \rightarrow F_n$ from \cite{CHall}, we need the assumption that the quadratic twist family we start with has members whose \neron model admits at least one multiplicative reduction away from $\infty$.

This is essentially necessary because one can find families of quadratic twists of some given elliptic curves, such that the whole family has no elliptic curve whose \neron model has multiplicative reductions. For instance, suppose $4$ and $27$ are invertible over the field $\F_q$. Then there are $\pi_1,\pi_2 \in \F_q[t]$ such that $4\pi_1t^3 + 27 \pi_2(t+1)^2=1$ with $deg(\pi_1)=1$ and $deg(\pi_2)=2$. One can readily check that the elliptic curve $E: y^2=x^3+\pi_1\pi_2t+\pi_1\pi_2^2(t+1)$ has discriminant $\Delta_E=\pi_1^2\pi_2^3(4\pi_1t^3+27\pi_2(t+1)^2)= \pi_1^2\pi_2^3$. Therefore, the \neron model of $E$ has no multiplicative reduction by Tate's algorithm. This elliptic curve $E$ has the least height in the quadratic twist family by remark 4.5. So, the whole quadratic twist family has no member whose \neron model admits multiplicative reductions away from $\infty$ by remark 3.4.

\end{remark}

Now we prove the main theorem.
\begin{proof}[Proof of Theorem 1.1]

Let $E: y^2 = x^3 + A_0 x + B_0$ be any non-isotrivial elliptic curve over $\F_q[t]$. We can always replace $E$ by $E_0 \in \{E_f\}$ that has minimal canonical height among all quadratic twists. Since we assumed that there exists at least one quadratic twist of $E$ whose \neron model admits a multiplicative reduction away from $\infty$, $E$ admits at least one multiplicative reduction.

\medskip
\textbf{Setup}
\medskip

Choose large enough $q$ such that the discriminant of $E / \F_{q}[t]$, denoted by $\Delta_E$, splits completely into linear factors as follows.
\begin{equation*}
    \Delta_E = \pi_0^{r_0} \pi_1^{r_1} \cdots \pi_m^{r_m}
\end{equation*}
Without loss of generality, assume E has multiplicative reduction at $\pi_0$. Note we can guarantee that the primes $\pi_i$'s are all linear. For those large enough $q$, we will explicitly determine the collection of all possible quadratic twists of $E / \F_{q}[t]$ whose height is bounded by $n$.

We order the family of quadratic twists of $E$ by canonical height. For any elliptic curve $E / \F_{q}[t]$, there exists a unique way to write $E$ as $y^2 = x^3 + Ax + B$ such that for any irreducible polynomial $p \in \F_{q}[t]$, if $p^4 | A$, then $p^6 \nmid B$. Then the canonical height of $E$ is given as follows.
\begin{equation*}
    h(E) = \mathrm{max} \{3 \deg A, 2 \deg B\}
\end{equation*}
In particular, we chose $E$ to have the least height in the family of quadratic twists of $E_0$. Hence, the coefficients $A,B$ of $E$ satisfy the aforementioned condition.

Any quadratic twist of $E / \F_{q}[t]$ can be uniquely written as $E_f: fy^2 = x^3 + Ax + B$ such that $f \in \F_{q}[t]$ is square-free. Assume $f = \pi_0^a \pi_{i_1} \pi_{i_2} \cdots \pi_{i_s} g$ where $\pi_{i_j}$'s are distinct primes belong to the set $\{\pi_1, \cdots, \pi_m\}$, $a = 0$ or $1$, and $g$ is a square-free polynomial such that $(g, \Delta_E) = 1$ in $\bar{\F}_q[t]$. Denote by $J$ the subset $\{\pi_{i_1}, \pi_{i_2}, \cdots, \pi_{i_s}\}$ of $\{\pi_1, \pi_2, \cdots, \pi_m\}$.

Fix a positive integer $n$. We now consider the quadratic twists $\{E_f\}$ whose height $h(E_f) \leq n$. 

\medskip
\textbf{Case 1}
\medskip

Suppose that $a = 0$, i.e. $\pi_0$ is not a prime factor of $f$. Remark 4.5 implies that the twist $E_f$ is isomorphic to the following elliptic curve, which is a minimal model.
\begin{equation*}
    y^2 = x^3 + \left( \prod_{\pi_{i_j} \in J} \pi_{i_j}^2 \right) g^2 Ax + \left( \prod_{\pi_{i_j} \in J} \pi_{i_j}^3 \right) g^3 B \tag{*}
\end{equation*}

We will use use $(*)$ to explicitly compute the height of $E_f$. We also note that for any prime $p | g$, $v_p(A) =0$ or $v_p(B) =0$. Otherwise, $g$ is not coprime to $\Delta_E$. Therefore, we have the following equivalent relation.
\begin{equation*}
    h(E_f) \leq n \Longleftrightarrow \mathrm{max} \left\{ \deg \left( \left( \prod_{\pi_{i_j} \in J} \pi_{i_j}^6 \right)  g^6 A^3 \right), \deg \left( \left( \prod_{i_j \in J} \pi_{i_j}^6 \right) g^6 B^2 \right) \right\} \leq n
\end{equation*}
We set $M := \mathrm{max}\{3 \deg A, 2 \deg B\}$ and $M_J := M + 6 \deg \left( \prod_{i_j \in I} \pi_{i_j} \right) = M + 6 |J|$. Hence the following equivalent relation holds.
\begin{equation*}
    h(E_f) \leq n \Longleftrightarrow \deg g \leq \frac{n - M_J}{6}
\end{equation*}
It is crucial to notice that the twist $E_f$ still has at least one place of multiplicative reduction at $\pi_0$, which can be checked using Tate's algorithm.

Denote by $E_J$ the following elliptic curve, which is minimal by remark 4.5.
\begin{equation*}
    E_J : y^2 = x^3 + \left( \prod_{\pi_{i_j} \in J} \pi_{i_j}^2 \right) Ax + \left( \prod_{\pi_{i_j} \in J} \pi_{i_j}^3 \right) B
\end{equation*}
Then the following equation holds.
\begin{align*}
    \sum_{\substack{f = \pi_{i_1} \pi_{i_2} \cdots \pi_{i_s} g \\ (g,\Delta_E) = 1 \\  h(E_f) \leq n}} |\Sel E_f| &= \sum_{g \in F \left( \frac{n - M_{J}}{6}\right) (\F_{q})}  |\Sel(E_J)_g| \\
    &= \left| \tilde{\tau} \left( \frac{n - M_{J}}{6}, p, E_{J} \right) (\F_{q}) \right| \\
    &= (p+1) \left( q^{\frac{n - M_{J}}{6} + 1} + \mathrm{O}_{n,p}(q^{\frac{n-M_J}{6}}) \right)
\end{align*}

\medskip
\textbf{Case 2}

Now assume $a = 1$, i.e. $f = \pi_0 \pi_{i_1} \pi_{i_2} \cdots \pi_{i_s} g$ such that $(g, \Delta_E) = 1$ and $J = \{\pi_{i_1}, \pi_{i_2}, \cdots, \pi_{i_s}\}$ is a subset of $\{\pi_1, \pi_2, \cdots, \pi_m\}$. Define $M$ and $M_J$ analogously to Case 1. Then, by the aformentioned argument in Case 1, we have that $E_f$ is isomorphic to the following minimal model.
\begin{equation*}
    y^2 = x^3 + \left( \prod_{\pi_{i_j} \in J} \pi_{i_j}^2 \right) \pi_0^2 g^2 Ax + \left( \prod_{\pi_{i_j} \in J} \pi_{i_j}^3 \right) \pi_0^3 g^3 B \tag{*}
\end{equation*}
Thus the height of $E_f$ can be written as follows.
\begin{equation*}
    h(E_f) \leq n \Longleftrightarrow \deg g \leq \frac{n - M_{J} - 6 \deg \pi_0}{6} = \frac{n - M_{J}}{6} - 1
\end{equation*}
We will use the bound of $h(E_f)$ to estimate the summation of the size of the $p$-Selmer group for quadratic twists $E_f$ by using lemma 3.7. Corollary 3.3 and lemma 3.7 implies that the maximal size of $p$-Selmer group of $E_f$ is the following.
\begin{equation*}
    |\Sel E_f| \leq p^{2 \deg \Delta_{E_f} + 4} \leq p^{2 h(E_f) + 4} = p^{2n + 4}
\end{equation*}
Therefore, the following equation gives the approximation on the size of the $p$-Selmer group over $\{E_f\}$ for the desired collection of $f$.
\begin{align*}
    \sum_{\substack{f = \pi_0 \pi_{i_1} \pi_{i_2} \cdots \pi_{i_s} g \\ (g,\Delta_E) = 1 \\  h(E_f) \leq n}} |\Sel E_f| &= \sum_{g \in F \left( \frac{n - M_{J}}{6} - 1\right) (\F_{q})}  \Sel(E_{J})_g \\
    &= \mathcal{M} \left( q^{\frac{n - M_{J}}{6}} + \mathrm{O}_{n,p}(q^{\frac{n - M_J}{6} - 1}) \right)
\end{align*}
Here, $\mathcal{M}$ is a positive integer such that $1 \leq \mathcal{M} \leq p^{2n+4}$.

\medskip
\textbf{Average Size}
\medskip

Using both aforementioned cases, we can now calculate the average size of $p$-Selmer group as $k \to \infty$. Recall that $\mathbb{E}_{n,k,p}$ is the average value of $p$-Selmer groups over families of quadratic twists of canonical height at most $n$. Then the following equation holds for any fixed $n \geq 30$. Note that we may need to require $n \geq 30$ because of the conditions on the degree of twisting polynomials from lemma 3.12 and theorem 3.13.
\begin{align*}
    \lim_{q \to \infty} \mathbb{E}_{n,p} &= \lim_{q \to \infty} \frac{ \sum_{J \subset \{\pi_1, \cdots, \pi_m\}}(p+1) \left( q^{\frac{n - M_J}{6} + 1} + \mathrm{O}_{n,p}(q^{\frac{n-M_J}{6}}) \right) + \mathcal{M} \left( q^{\frac{n - M_J}{6}} + \mathrm{O}_{n,p}(q^{\frac{n-M_J}{6} - 1}) \right)}{\sum_{J \subset \{\pi_1, \cdots, \pi_m\}} |F(\frac{n - M_J}{6})(\F_{q})| + |F(\frac{n - M_J}{6}-1)(\F_{q})|} \\
    &= \lim_{q \to \infty} \sum_{J \subset \{\pi_1, \cdots, \pi_m\}} \frac{(p+1) \left( q^{\frac{n - M_J}{6} + 1} + \mathrm{O}_{n,p}(q^{\frac{n-M_J}{6}}) \right) + \mathcal{M} \left( q^{\frac{n - M_J}{6}} + \mathrm{O}_{n,p}(q^{\frac{n-M_J}{6}-1}) \right)}{|F(\frac{n - M_J}{6})(\F_{q})| + |F(\frac{n - M_J}{6}-1)(\F_{q})|} \\ 
    &\times \lim_{q \to \infty}  \frac{|F(\frac{n - M_J}{6})(\F_{q})| + |F(\frac{n - M_J}{6}-1)(\F_{q})|}{\sum_{J \subset \{\pi_1, \cdots, \pi_m\}} |F(\frac{n - M_J}{6})(\F_{q})| + |F(\frac{n - M_J}{6}-1)(\F_{q})|} \\
    &= \lim_{q \to \infty} \sum_{J \subset \{\pi_1, \cdots, \pi_m\}} \frac{(p+1) \left( q^{\frac{n - M_J}{6} + 1} + \mathrm{O}_{n,p}(q^{\frac{n-M_J}{6}}) \right) + \mathcal{M} \left( q^{\frac{n - M_J}{6}} + \mathrm{O}_{n,p}(q^{\frac{n-M_J}{6}-1}) \right)}{q^{\frac{n - M_J}{6} + 1} + \mathrm{O}_{n,p}(q^{\frac{n-M_J}{6}}) + q^{\frac{n - M_J}{6}} + \mathrm{O}_{n,p}(q^{\frac{n-M_J}{6}-1})} \\
    &\times \lim_{q \to \infty} \frac{|F(\frac{n - M_J}{6})(\F_{q})| + |F(\frac{n - M_J}{6}-1)(\F_{q})|}{\sum_{J \subset \{\pi_1, \cdots, \pi_m\}} |F(\frac{n - M_J}{6})(\F_{q})| + |F(\frac{n - M_J}{6}-1)(\F_{q})|} \\
    &= \lim_{q \to \infty} \sum_{J \subset \{\pi_1, \cdots, \pi_m\}} (p+1) \frac{|F(\frac{n - M_J}{6})(\F_{q})| + |F(\frac{n - M_J}{6}-1)(\F_{q})|}{\sum_{J \subset \{\pi_1, \cdots, \pi_m\}} |F(\frac{n - M_J}{6})(\F_{q})| + |F(\frac{n - M_J}{6}-1)(\F_{q})|} \\
    &= (p+1) \lim_{q \to \infty} \sum_{J \subset \{\pi_1, \cdots, \pi_m\}} \frac{|F(\frac{n - M_J}{6})(\F_{q})| + |F(\frac{n - M_J}{6}-1)(\F_{q})|}{\sum_{J \subset \{\pi_1, \cdots, \pi_m\}} |F(\frac{n - M_J}{6})(\F_{q})| + |F(\frac{n - M_J}{6}-1)(\F_{q})|} \\
    &= p+1
\end{align*}
Therefore, the average size of $p$-Selmer groups of family of quadratic twists of any elliptic curve $E$ over $\F_q[t]$ is given by $p+1$:
\begin{equation*}
    \lim_{n \to \infty} \lim_{q \to \infty} \mathbb{E}_{n,p} = p+1
\end{equation*}
\end{proof}
\nocite{*}
\bibliographystyle{alpha}
\bibliography{chmix}

\printindex

\end{document}